\newtheorem{Theorem}{Theorem}[section]
\newtheorem{Lemma}[Theorem]{Lemma}
\newtheorem{Corollary}[Theorem]{Corollary}
\theoremstyle{definition}
\newtheorem{Definition}{Definition}
\theoremstyle{remark}
\newtheorem{Remark}[Theorem]{Remark} 
\numberwithin{equation}{section}
\newcommand{\R}{\mathbb R}
\newcommand{\C}{\mathbb C}
\newcommand{\D}{\mathbb D}
\newcommand{\ISU}{{\rm SU}_{1, 1}}
\newcommand{\Uone}{{\rm U}_1}
\newcommand{\isu}{\mathfrak{su}_{1, 1}}
\newcommand{\LISU}{(\Lambda {\rm SU}_{1, 1})_\sigma}
\newcommand{\ad}{\operatorname{Ad}}
\newcommand{\di}{\operatorname{diag}}
\newcommand{\tr}{\operatorname{Tr}}
\newcommand{\Nil}{{\rm Nil}_3}
\renewcommand{\Re}{\operatorname {Re}}
\renewcommand{\Im}{\operatorname {Im}}
\newcommand{\be}{\begin{equation*}}
\newcommand{\ee}{\end{equation*}}
\newcommand{\Lt}{\mathbb L_3}
\renewcommand{\l}{\lambda}
\begin{document}
\title{A solution to the  Bernstein problem in the 
 three-dimensional Heisenberg group via loop groups}
 \author[J. F.~Dorfmeister]{Josef F. Dorfmeister}
 \address{Fakult\"at f\"ur Mathematik, 
 TU-M\"unchen, Boltzmann str. 3, D-85747,  Garching,  Germany}
 \email{dorfm@ma.tum.de}
\author[J.~Inoguchi]{Jun-ichi Inoguchi}
 \address{Department of Mathematical Sciences, 
 Faculty of Science, Yamagata University,  Yamagata, 990--8560, Japan}
 \email{inoguchi@sci.kj.yamagata-u.ac.jp}
 \thanks{The second named author is partially supported by Kakenhi 24540063}
\author[S.-P.~Kobayashi]{Shimpei Kobayashi}
 \address{Department of Mathematics, Hokkaido University, Sapporo, 060-0810, Japan}
 \email{shimpei@math.sci.hokudai.ac.jp}
 \thanks{The third named author is partially supported by Kakenhi 26400059}
\subjclass[2010]{Primary~53A10, Secondary~53C42}
\keywords{Bernstein problem; minimal graphs; Heisenberg group; 
 loop groups; spinors}
\date{\today}
\begin{abstract} 
 In this note we present a short alternative proof 
 for the Bernstein problem in the three-dimensional Heisenberg group $\Nil$ 
 by using the loop group technique.
\end{abstract}
\maketitle
\section*{Introduction}
 The {\it Bernstein problem} is one of the traditional problems 
 of global differential geometry.
 The original result due to Bernstein asserts that 
 every entire minimal graph in Euclidean three-space 
 $\mathbb{R}^3(x_1,x_2,x_3)$ is a plane.
 In other words, 
 Bernstein's result shows that the only global solution 
 on the $(x_1, x_2)$-plane to the
 so-called {\it minimal surface equation} 
\be
\{1+(f_{x_1})^2\}f_{x_2x_2}-2f_{x_1}f_{x_2}f_{x_1x_2}
+\{1+(f_{x_2})^2\}f_{x_1x_1}=0  
\ee
 is a linear function of $x_1$ and $x_2$.

 The Bernstein problem has been generalized  
 to  a problem basically asking for a classification of all 
 entire minimal graphs.
 On the other hand, when the ambient space is not the Euclidean 
 three-space, the Bernstein problem often needs to be amended.
 For instance, in Minkowski three-space $\mathbb{L}_3$ equipped with the
 natural Lorentz metric $dx_1^2+dx_2^2-dx_3^2$,  
 there are many entire (timelike) minimal graphs over the timelike plane 
 $\mathbb{L}_2=\mathbb{R}^2(x_2,x_3)$, see for example \cite{Mil1}. 

 Next, we focus on the three-dimensional \textit{Heisenberg group} $\Nil$ 
 which is one of the model spaces of Thurston geometries \cite{Thurston}.
 The space $\Nil$ is realized as Cartesian three-space 
 $\mathbb{R}^3(x_1,x_2,x_3)$ equipped with the Riemannian metric
\be
 dx_1^2+dx_2^2+\left\{dx_3+\frac{1}{2}(x_2dx_1-x_1dx_2)\right\}^2
\ee
 and a nilpotent Lie group structure, see for example \cite{DIK:mini}. 
 The Riemannian metric is invariant under 
 the nilpotent Lie group structure and has a 4-dimensional isometry group.
 The identity component of the isometry group is a semi-direct 
 product $\mathrm{Nil}_3\ltimes \mathrm{SO}_2$.

 It has been known for a long time that in 
 $\mathrm{Nil}_3$ nontrivial entire minimal graphs exist, see for example
 \cite{IKOS}. Therefore, in $\Nil$ the 
 Bernstein problem has been phrased more specifically 
 as the problem to construct 
 entire minimal graphs over the natural $(x_1, x_2)$-plane with a
 prescribed holomorphic quadratic differential.
 
 Under this formulation, Fern{\'a}ndez and Mira studied the Bernstein problem in 
 $\Nil$ \cite{Fer-Mira2}. They proved that for a prescribed 
 holomorphic quadratic differential $Q\,dz^2$ over the complex plane 
 $\C$ with $Q\neq 0$ or the unit disc $\mathbb{D}$,
 there exists a two-parameter family of entire 
 minimal vertical graphs whose Abresch-Rosenberg differential is $Q\,dz^2$.
 Their proof relies firstly on the Lawson-type correspondence 
 (often called \textit{sister correspondence}) between 
 minimal surfaces in $\mathrm{Nil}_3$ and 
 surfaces of constant mean curvature ({\sc CMC} in short) 
 with mean curvature $H =1/2$ in 
 the product space $\mathbb{H}^2\times\mathbb{R}$, 
 where $\mathbb H^2$ denotes the hyperbolic two-space. 
 Secondly, they use the correspondence between harmonic maps into
 $\mathbb{H}^2$ and {\sc CMC} surfaces with mean curvature $H =1/2$ 
 in the product space $\mathbb{H}^2\times\R$. 
 Finally, they use a result of Wan and Au \cite{Wan, WanAu}
 solving the Bernstein problem for spacelike {\sc CMC} surfaces in 
 $\mathbb{L}_3$ and use that  the Gauss map of 
 those surfaces is also harmonic into $\mathbb{H}^2$.

 Our proof is much more direct.
 In our previous work \cite{DIK:mini}, 
 we have established a generalized Weierstrass 
 type representation for 
 minimal surfaces in $\mathrm{Nil}_3$.
 Every simply connected (nowhere vertical) minimal surface
 is obtained from an extended frame for a harmonic map 
 into the hyperbolic two-space $\mathbb{H}^2$. 
 In this paper we give a short proof of
 the solution to the Bernstein problem in $\mathrm{Nil}_3$
 by virtue of the generalized Weierstrass 
 type representation established in \cite{DIK:mini}.
 The advantage of our approach is that we can give a direct 
 relation between minimal graphs in $\mathrm{Nil}_3$ and 
 spacelike {\sc CMC} surface with mean curvature $H=1/2$ 
 graphs in $\mathbb{L}_3$, Theorem \ref{thm:symcorrespond}.
 This relation enables us to give a simple alternative proof 
 of Fern{\'a}ndez-Mira's theorem, Theorem \ref{thm:Bern}.

 Our new proof actually also provides new insights.
 In fact our proof clarifies the geometric meaning of 
 the two-parameter ambiguity of entire minimal graphs with 
 prescribed Abresch-Rosenberg differential. While it is quite 
 clear that the two-parameter family is related to the boosts 
 in $\ISU$, our argument also shows how the corresponding family of surfaces varies in $\Nil$.

\section{Bernstein problem}
 We discuss the Bernstein problem in $\Nil$, that is, 
 the classification of entire minimal vertical graphs in 
 $\Nil$. We only consider vertical graphs. Therefore we will sometimes omit the word 
 ``vertical''.  In Appendix \ref{sc:Preliminaries}, we give 
 a short review of facts and results of \cite{DIK:mini} which are used 
 for the solution to the Bernstein problem via loop groups.
 From now on, we denote the coordinates of $\Nil$ or $\Lt$ by $(x_1, x_2, x_3)$.

\subsection{Completeness}\label{sc:Com}
 The basic result used in this paper is Theorem \ref{thm:Sym}. It explains the direct relation between 
 minimal surfaces in $\Nil$ and spacelike {\sc CMC} surfaces in $\Lt$.
 This close relationship is also underlined by a simple relation between the corresponding metrics.

\begin{Lemma}\label{lm:metricrelation}
 Let $f^{\l}$ and $f_{\Lt}^{\l}$ be an associated  family of minimal surfaces in $\Nil$ 
 and an associated family of spacelike {\sc CMC} surfaces with mean curvature $H =1/2$
 in $\Lt$  correlated and defined as in Theorem 
 \ref{thm:Sym}, respectively. Denote the metric of $f^{\l}$ by $e^u dz d\bar z$
 and the metric of $f_{\Lt}^{\l}$ by  $e^{u_{\Lt}}dz d \bar z$, respectively.
 Moreover, let $\phi_3^{\l}dz$ be the coefficient of $e_3$ in 
 $(f^{\l})^{-1} f_{z}^{\l} dz
 = \sum_{i =1 }^3 (\phi_{i}^{\l} dz) e_{i}$. Then the following relation holds$:$
\be
e^{u_{\Lt}} +  4 |\phi_3^{\l}|^2 = e^{u}.
\ee
\end{Lemma}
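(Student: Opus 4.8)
The plan is to compute both induced metrics directly from the pulled-back Maurer--Cartan forms and to use that the two surfaces of Theorem \ref{thm:Sym} are built from one and the same (unitary) frame, so that their tangential data differ only through the signature of the ambient metric. First I would record the metric of $f^{\l}$. Since $\{e_1,e_2,e_3\}$ is a left-invariant orthonormal frame for the positive-definite metric of $\Nil$ and $f^{\l}$ is conformal in $z$, the left-invariance of the metric gives $f_z^{\l}=\sum_{i}\phi_i^{\l}\,\tilde e_i$ with $\tilde e_i$ orthonormal, so that the cross terms drop out and
\[
e^{u}=2\langle f_z^{\l},f_{\bar z}^{\l}\rangle=2\sum_{i=1}^3|\phi_i^{\l}|^2 .
\]

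Next I would carry out the same computation for $f_{\Lt}^{\l}$. Here the ambient frame is of the same type, except that the third (vertical) leg is now \emph{timelike}; accordingly, writing $(f_{\Lt}^{\l})^{-1}(f_{\Lt}^{\l})_z=\sum_{i}\psi_i^{\l}e_i$, the induced metric picks up a sign on the vertical term,
\[
e^{u_{\Lt}}=2\bigl(|\psi_1^{\l}|^2+|\psi_2^{\l}|^2-|\psi_3^{\l}|^2\bigr).
\]

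The decisive step is to compare the two sets of coefficients by means of Theorem \ref{thm:Sym}. Because both immersions arise from the same extended frame, their horizontal parts coincide, $\psi_1^{\l}=\phi_1^{\l}$ and $\psi_2^{\l}=\phi_2^{\l}$. Conformality in each geometry then reads $\sum_{i}(\phi_i^{\l})^2=0$ in $\Nil$ and $(\psi_1^{\l})^2+(\psi_2^{\l})^2-(\psi_3^{\l})^2=0$ in $\Lt$; combining these forces $(\psi_3^{\l})^2=-(\phi_3^{\l})^2$, hence $|\psi_3^{\l}|=|\phi_3^{\l}|$ (the two vertical coefficients differ only by a phase, which is irrelevant here). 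Substituting into the two displayed formulas and subtracting gives
\[
e^{u}-e^{u_{\Lt}}=2\bigl(|\phi_3^{\l}|^2-(-|\phi_3^{\l}|^2)\bigr)=4|\phi_3^{\l}|^2,
\]
which is the asserted identity.

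I expect the genuine work to lie in the comparison step rather than in the two metric computations. Concretely, one must unwind the Sym-type formulas of Theorem \ref{thm:Sym} to see precisely how the coefficient functions $\phi_i^{\l}$ and $\psi_i^{\l}$ are read off from the common frame, and to confirm that the passage from the definite metric of $\Nil$ to the Lorentzian metric of $\Lt$ amounts exactly to a sign change on the vertical leg while leaving the horizontal components, and the conformal parameter $z$, intact. The signature flip itself is geometrically transparent and is what produces the factor $4$; the care needed is in the bookkeeping of the gauge and of the factor conventions underlying $e^{u}=2\langle f_z^{\l},f_{\bar z}^{\l}\rangle$.
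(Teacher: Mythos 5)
Your argument is correct, but it takes a genuinely different route from the paper. The paper's proof is a two-line spinor computation: it quotes the explicit formulas $e^{u}=4(|\psi_1^{\l}|^2+|\psi_2^{\l}|^2)^2$ and $e^{u_{\Lt}}=4(|\psi_1^{\l}|^2-|\psi_2^{\l}|^2)^2$ for the two conformal factors in terms of the generating spinors, together with $\phi_3^{\l}=2\psi_1^{\l}\overline{\psi_2^{\l}}$, and the identity reduces to $(a-b)^2+4ab=(a+b)^2$. You instead work directly with the coefficient functions of the two derivatives, using that the two immersions share their $x_1$-, $x_2$-components (a fact the paper itself extracts ``by inspection'' from the two Sym formulas \eqref{eq:SymMin} and \eqref{eq:symNil} in the proof of Theorem \ref{thm:compl}), that the left-invariant coframe of $\Nil$ has $\theta^1=dx_1$, $\theta^2=dx_2$ so the horizontal Maurer--Cartan coefficients really are coordinate derivatives, and that the two conformality conditions $\sum_k(\phi_k^{\l})^2=0$ and $(\psi_1^{\l})^2+(\psi_2^{\l})^2-(\psi_3^{\l})^2=0$ force $|\psi_3^{\l}|=|\phi_3^{\l}|$; the signature flip on the vertical leg then produces the $4|\phi_3^{\l}|^2$. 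Your version makes the geometric origin of the identity transparent (it is literally the difference between the Riemannian and Lorentzian lengths of the same tangent vectors) and does not require the spinor parametrization at all, at the cost of having to justify carefully the identification of horizontal components across the two Sym formulas --- which you correctly flag as the real content. The paper's version is shorter but hides the geometry in the quoted spinor formulas from \cite{DIK:mini} and \cite{BRS:Min}. Both are valid; you may wish to note explicitly that your $\psi_i^{\l}$ are the coefficients of $(f^{\l}_{\Lt})_z$ and not the generating spinors of the paper, to avoid a notational clash with Appendix \ref{sc:Preliminaries}.
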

\begin{proof}
 It is known that the conformal factors $e^{u}$ and $e^{u_{\Lt}}$
 can be computed explicitly 
 in terms of spinors, see \cite[Section 3.1]{DIK:mini},
 Remark \ref{rm:AppRem} and  \cite{BRS:Min}:
 \be 
e^u =  4 (|\psi_1^{\l}|^2 + |\psi_2^{\l}|^2)^2, \;\;\; e^{u_{\Lt}}=  4 (|\psi_1^{\l}|^2 - |\psi_2^{\l}|^2)^2,
\ee
 where $\psi_j^{\l} \;\;(j =1, 2)$ is a family of spinors for 
 the associated family $f^{\l}$.
 Since $\phi_3^{\l} = 2 \psi_1^{\l} \overline{\psi_2^{\l}}$, the claim follows.
\end{proof}

\begin{Remark}\label{rm:metrics}
 It is known that the metrics $e^{u_{\Lt}} dz d \bar z$ of an associated family 
 of spacelike {\sc CMC} surfaces $f_{\Lt}^{\l}$
 are independent of $\l$, that is, on a simply connected domain, any 
  two members of the associated family $\{f_{\Lt}^{\l}\}_{\l \in \mathbb S^1}$ are isometric. In fact the metric can be computed by the support $h (dz)^{1/2} (d \bar z)^{1/2}$, 
 see Appendix \ref{sc:Preliminaries} for definition, as
 \begin{equation}\label{eq:supportmetric}
 h^2 dz d \bar z= e^{u_{\Lt}}dz d \bar z.
 \end{equation}
 However, the metrics $e^{u} dz d \bar z$  of an associated family of 
 minimal surface $f^{\l}$ depend on $\l$, that is, 
 any two members of the associated family $\{f^{\l}\}_{\l \in \mathbb S^1}$ 
 are,  in general, non-isometric. 
\end{Remark}
 Using the relation above, we have the following theorem.

\begin{Theorem}\label{thm:compl}
 Let $f^{\l}$ and $f_{\Lt}^{\l}$ be an associated  family of minimal surfaces in $\Nil$ 
 and an associated family of spacelike {\sc CMC} surfaces with mean curvature $H =1/2$
 in $\Lt$ correlated and defined as in Theorem 
 \ref{thm:Sym}, respectively. Assume that one member of the associated family 
 $\{f_{\Lt}^{\l}\}_{\l \in \mathbb S^1}$ is closed with respect to 
 the Euclidean topology. Then each member of the associated family $\{f^{\l}\}_{\l \in \mathbb S^1}$ is a complete, entire graph.
\end{Theorem}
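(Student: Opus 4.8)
The plan is to route everything through a single metric comparison: among the induced metric $e^{u}\,dz\,d\bar z$ of $f^{\l}$, the pullback of the flat base metric under the vertical projection, and the {\sc CMC} metric $e^{u_{\Lt}}\,dz\,d\bar z$, the last is the smallest, and once it is known to be complete the other two inherit completeness, which is exactly what the two conclusions require. So the heart of the matter is to show that $e^{u_{\Lt}}\,dz\,d\bar z$ is a complete metric on the (simply connected) domain.

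For that, I would first use Remark \ref{rm:metrics}: since the metrics $e^{u_{\Lt}}\,dz\,d\bar z$ are independent of $\l$, all members of $\{f_{\Lt}^{\l}\}_{\l\in\mathbb S^1}$ are mutually isometric, so it suffices to treat the one member that is closed in the Euclidean topology. That member is a properly embedded spacelike surface; spacelikeness makes it locally a graph $x_3=\varphi(x_1,x_2)$ with $|\nabla\varphi|<1$, i.e. $\varphi$ is $1$-Lipschitz, and I would first deduce that it is an \emph{entire} graph over the spacelike plane $\{x_3=0\}$: otherwise the base domain would have a finite boundary point, to which the $1$-Lipschitz function $\varphi$ extends continuously, producing a limit point of the surface lying off the surface and contradicting closedness. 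The remaining and genuinely substantial point---the main obstacle---is to upgrade ``entire spacelike {\sc CMC} graph'' to ``metrically complete''. This does not follow from closedness alone, because for a spacelike graph the induced metric is \emph{dominated} by the ambient Euclidean metric, so a divergent path of finite intrinsic length is a priori possible; one must genuinely invoke the $H=\tfrac12$ {\sc CMC} structure (equivalently, the harmonicity of the Gauss map into $\mathbb H^2$ and the support-metric identity \eqref{eq:supportmetric}), which is precisely where the analytic input from the theory of spacelike {\sc CMC} surfaces enters.

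Granting that $e^{u_{\Lt}}\,dz\,d\bar z$ is complete, the two conclusions follow by soft arguments. For completeness of $f^{\l}$: Lemma \ref{lm:metricrelation} gives $e^{u}=e^{u_{\Lt}}+4|\phi_3^{\l}|^2\geq e^{u_{\Lt}}$ pointwise, so lengths measured in $e^{u}\,dz\,d\bar z$ dominate those measured in $e^{u_{\Lt}}\,dz\,d\bar z$; since every divergent path has infinite length for the latter, it has infinite length for the former, and hence each $f^{\l}$ is complete. For the entire-graph property, let $p$ be the vertical projection of $f^{\l}$ onto the $(x_1,x_2)$-plane; as $f^{\l}$ is nowhere vertical, $p$ is a local diffeomorphism. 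Writing $(f^{\l})^{-1}f^{\l}_z\,dz=\sum_i \phi_i^{\l}\,dz\,e_i$ with $e_3$ the unit vertical field, the pullback of the base metric differs from the induced metric only by the vertical contribution $2|\phi_3^{\l}|^2\,dz\,d\bar z$, so by Lemma \ref{lm:metricrelation}
\[
p^{*}(dx_1^2+dx_2^2)=\bigl(e^{u}-2|\phi_3^{\l}|^2\bigr)\,dz\,d\bar z=\bigl(e^{u_{\Lt}}+2|\phi_3^{\l}|^2\bigr)\,dz\,d\bar z\geq e^{u_{\Lt}}\,dz\,d\bar z .
\]
Thus $p^{*}(dx_1^2+dx_2^2)$ is complete, so $p$ is a local isometry from a complete surface onto the flat plane and is therefore a covering map; as $\mathbb R^2$ is simply connected and the domain is connected, $p$ is a global diffeomorphism onto $\mathbb R^2$, i.e. $f^{\l}$ is an entire vertical graph. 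This holds for every $\l\in\mathbb S^1$, completing the proof.
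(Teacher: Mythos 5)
Your overall architecture coincides with the paper's: reduce to the one member of $\{f_{\Lt}^{\l}\}$ that is closed, establish that it is a complete entire graph, propagate completeness through the whole associated family using the $\l$-independence of $e^{u_{\Lt}}\,dz\,d\bar z$ (Remark \ref{rm:metrics}), and then transfer completeness to $f^{\l}$ via $e^{u}=e^{u_{\Lt}}+4|\phi_3^{\l}|^2\ge e^{u_{\Lt}}$ from Lemma \ref{lm:metricrelation}. Those soft parts are fine. The problem is that you explicitly ``grant'' the central step: that the closed spacelike {\sc CMC} surface is metrically complete. This is the one genuinely nontrivial analytic input in the whole theorem, and a proof that leaves it as an acknowledged assumption is not a proof. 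The paper closes exactly this gap by citing Cheng--Yau \cite[p.~415]{CY}, whose result is that a spacelike constant mean curvature hypersurface in $\Lt$ which is closed with respect to the Euclidean topology is complete in the induced metric; the entire-graph property of that member is then taken from \cite[Proposition 2]{Wan}. You correctly diagnose why closedness alone is insufficient (the induced metric of a spacelike surface is dominated by the Euclidean one), but you neither prove nor cite the needed completeness statement, so the argument does not get off the ground.

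Two further remarks. First, for the entire-graph property of $f^{\l}$ the paper argues more directly: by inspection of the Sym formulas \eqref{eq:SymMin} and \eqref{eq:symNil}, $f^{\l}$ and $f^{\l}_{\Lt}$ share the same $x_1$- and $x_2$-components, so $f^{\l}$ is an entire graph as soon as $f^{\l}_{\Lt}$ is. Your covering-map argument is a workable alternative, but the displayed identity for $p^{*}(dx_1^2+dx_2^2)$ is wrong as stated: the vertical contribution to the induced metric is $\phi_3^2\,dz^2+2|\phi_3^{\l}|^2\,dz\,d\bar z+\bar\phi_3^2\,d\bar z^2$, not merely $2|\phi_3^{\l}|^2\,dz\,d\bar z$, so the pullback of the flat metric is not conformal. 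Using $\phi_1^2+\phi_2^2=-\phi_3^2$ one gets
\[
p^{*}(dx_1^2+dx_2^2)=-\phi_3^2\,dz^2+\bigl(e^{u}-2|\phi_3^{\l}|^2\bigr)\,dz\,d\bar z-\bar\phi_3^2\,d\bar z^2\;\ge\;\bigl(e^{u}-4|\phi_3^{\l}|^2\bigr)\,dz\,d\bar z=e^{u_{\Lt}}\,dz\,d\bar z,
\]
so your final inequality, and with it the covering argument, survives; but the intermediate equality should be corrected.
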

\begin{proof}
 We denote the spacelike {\sc CMC} surface in $\Lt$ which is closed with respect to the 
 Euclidean topology by $f_{\Lt}^* = f_{\Lt}^{\l}|_{\l^* \in \mathbb S^1}$.
 From the assumption and by \cite[p.~415]{CY}, we conclude 
 that $f^{*}_{\Lt}$ is complete. Moreover from \cite[Proposition 2]{Wan}, 
 $f^{*}_{\Lt}$ is also an entire graph. 
 Since within the associated family $\{f_{\Lt}^{\l}\}_{\l \in \mathbb S^1}$ 
 the metric is invariant, see Remark \ref{rm:metrics}, 
 each member of the associated family of spacelike {\sc CMC} surfaces 
 $\{f_{\Lt}^{\l}\}_{\l \in \mathbb S^1}$ is also complete.
 Then from Lemma \ref{lm:metricrelation}, we have that 
 each member of the associated family of minimal surfaces 
 $\{f^{\l}\}_{\l \in \mathbb S^1}$ is complete.
 
 Let us  look more  closely at the correspondence between 
 $f_{\Lt}^{\l}$ and $f^{\l}$.
 From  formulas  \eqref{eq:SymMin} and \eqref{eq:symNil} we infer by inspection 
 that $f^{\l}$ and $f^{\l}_{\Lt}$ share the same $x_1$-, $x_2$-components.
 Here, as pointed out before, we denote the coordinates 
 of $\Nil$ or $\Lt$ by $(x_1, x_2, x_3)$.
 Therefore, since $f_{\Lt}^{\l}$ is an entire graph, thus $f^{\l}$
 also is an entire graph. This completes the proof.
\end{proof}

\subsection{Rigid motions}\label{sc:Rigid}
 It is known that the isometry group of $\Lt$ is the six-dimensional Lie group
 which is generated by 
 a one-parameter family of rotations around $x_3$-axis (the timelike axis), 
 a two-parameter family of boosts and three families of translations.
 In contrast, the isometry group of $\Nil$ is only four-dimensional and 
 is generated by a one-parameter family of rotations around $x_3$-axis 
 and three families of translations  in $\Nil$.

 A comparison of the two Sym formulas in Theorem \ref{thm:Sym} 
 indicates that isometries of Minkowski space will not necessarily 
 become isometries of $\Nil$. The precise relation will be made 
 clear in the Lemma below.
\begin{Lemma}\label{lem:isometry}
 Let $f_{\Lt}^{\l}$ and $\tilde{f}_{\Lt}^{\l}$ be two associated family of 
 spacelike {\sc CMC} surfaces 
 with mean curvature $H=1/2$ in $\Lt$ defined by the Sym-formula 
 in Theorem  \ref{thm:Sym} for 
 some extended frames $F^{\l}$ and $\tilde F^{\l}$, respectively and set 
 $f_{\Lt}= f_{\Lt}^{\l}|_{\l=1}$ and $\tilde f_{\Lt}=\tilde f_{\Lt}^{\l}|_{\l=1}$.
 Moreover, let $f^{\l}$ and $\tilde f^{\l}$ denote the two associated families of
 minimal surfaces in 
 $\Nil$ \ defined from the same extended frames $F^{\l}$ and $\tilde F^{\l}$, 
 respectively and set  $f= f^{\l}|_{\l=1}$ and $\tilde f=\tilde f^{\l}|_{\l=1}$.   
 Assume that $f_{\Lt}$ and $\tilde f_{\Lt}$
 are isometric by some rigid motion in $\Lt$.
 Then the following statements hold$:$
\begin{enumerate}
\item If $f_{\Lt}$ and $\tilde f_{\Lt}$
 are isometric by a rotation around $x_3$-axis $($the timelike axis$)$, then 
 $f$ and $\tilde f$ are isometric by the rotation around 
 $x_3$-axis $($the same angle$)$ and some translation.
\item If $f_{\Lt}$ and $\tilde f_{\Lt}$
 are isometric by a translation, then 
 $f$ and $\tilde f$ are isometric by 
 some translation $($not necessarily the same translation$)$.
\item If $f_{\Lt}$ and $\tilde f_{\Lt}$
 are isometric by a boost, then 
 $f$ and $\tilde f$ are, in general,  not isometric.
\end{enumerate}
\end{Lemma}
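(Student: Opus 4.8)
The plan is to translate the hypothesis into the language of extended frames and then track, case by case, how a rigid motion of $\Lt$ propagates through the two Sym formulas of Theorem \ref{thm:Sym}. Recall that the identity component of the isometry group of $\Lt$ is generated by its linear part --- rotations about the timelike $x_3$-axis together with boosts, which form $\mathrm{SO}^+_{2,1}\cong\ISU/\{\pm\id\}$ acting on the hyperbolic plane $\mathbb{H}^2$ --- and by the translations. Via the Gauss-map correspondence underlying Theorem \ref{thm:Sym}, the linear isometries are realized by left multiplication $\tilde F^{\l}=g\,F^{\l}$ with a constant $g\in\ISU$, rotations corresponding to $g$ in the diagonal subgroup $\Uone$ and boosts to the complementary non-compact directions, while a translation corresponds to adding a constant vector in the Sym formula. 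Thus the hypothesis that $f_{\Lt}$ and $\tilde f_{\Lt}$ are isometric amounts to $\tilde F^{\l}=g\,F^{\l}$ for a constant $g$ (realizing the linear part) together with a possible additive translation constant, and the three cases are distinguished by the type of $g$.

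For statement (1), take $g=\di(e^{i\theta/2},e^{-i\theta/2})\in\Uone$ and substitute $\tilde F^{\l}=g\,F^{\l}$ into \eqref{eq:SymMin} and \eqref{eq:symNil}. On the $\Lt$ side this reproduces, by construction, the rotation through $\theta$. On the $\Nil$ side I would compute the effect directly: since $f^{\l}$ and $f_{\Lt}^{\l}$ share their $x_1$- and $x_2$-components (as in the proof of Theorem \ref{thm:compl}), the horizontal part of $\tilde f$ is that of $f$ rotated by $\theta$. For the vertical component the conjugation by $g$ fixes the $e_3$-direction, so that the $x_3$-component of $\tilde f$ equals that of $f$ up to an additive constant; this constant is precisely a vertical $\Nil$-translation. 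Hence $\tilde f$ is $f$ followed by the rotation through $\theta$ about the $x_3$-axis and a translation, as claimed. The extra translation is forced because the $\Nil$ Sym formula carries an inhomogeneous vertical term and is therefore not strictly $\Uone$-equivariant.

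For statement (2), a translation of $\Lt$ shifts the $x_1$-, $x_2$-components of $f_{\Lt}$, hence of $f$, by a constant vector $(a_1,a_2)$, and shifts the $\Lt$-vertical component by $a_3$. A left translation of $\Nil$ by $(b_1,b_2,b_3)$ shifts the horizontal coordinates by $(b_1,b_2)$ and the vertical coordinate by $b_3+\tfrac12(b_1 x_2-b_2 x_1)$; taking $b_1=a_1$, $b_2=a_2$ matches the horizontal parts of $f$ and $\tilde f$. It then suffices to check that the residual vertical discrepancy between $\tilde f$ and the $\Nil$-translated $f$ is constant, which fixes $b_3$. I would verify this constancy directly from \eqref{eq:symNil}, using that the horizontal data has merely been translated and that the construction is covariant under $\Nil$-translations. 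The resulting $\Nil$-translation is in general not the same one, since $b_3\neq a_3$ because of the $\tfrac12(x_2\,dx_1-x_1\,dx_2)$ coupling in the $\Nil$ structure.

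For statement (3) the linear part $g$ is a boost, lying in the non-compact part of $\ISU$. The motion is an isometry of $\Lt$, so $e^{u_{\Lt}}$ is preserved; by Remark \ref{rm:metrics} it is even $\l$-independent. However, by Lemma \ref{lm:metricrelation} the induced $\Nil$-metric is $e^{u}=e^{u_{\Lt}}+4|\phi_3^{\l}|^2$, and the vertical coefficient $\phi_3^{\l}=2\psi_1^{\l}\overline{\psi_2^{\l}}$ is built from the spinors $\psi_j^{\l}$, which transform nontrivially under a boost. The plan is to show that $|\phi_3^{\l}|$ genuinely changes, so that the two $\Nil$-metrics $e^{u}$ and $e^{\tilde u}$ differ as conformal factors; since any isometry of $\Nil$ must preserve the induced metric, $f$ and $\tilde f$ cannot then be isometric. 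I would make this sharp by exhibiting an explicit boost and spinors for which $|\phi_3^{\l}|$ is altered. I expect this last point to be the main obstacle: a negative statement demands an honest invariant distinguishing $f$ from $\tilde f$, so the crux is to verify that the boost-induced change in $4|\phi_3^{\l}|^2$ is not reabsorbed by any reparametrization and that the two surfaces are therefore truly non-isometric.
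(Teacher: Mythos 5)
Your overall skeleton (lift the isometry to the level of extended frames, push it through the two Sym formulas, treat rotations, translations and boosts separately) matches the paper's, but there is a genuine gap in how you lift a rigid motion of $\Lt$ to the frame level, and it breaks case (2). You model a translation of $\Lt$ as ``adding a constant vector in the Sym formula,'' i.e.\ $\tilde f_{\Lt}^{\l}=f_{\Lt}^{\l}+T^{\l}$ with $T^{\l}$ independent of $z$. In the loop-group picture a translation is instead encoded in the $\l$-dependence of the dressing matrix: $\tilde F^{\l}=M^{\l}F^{\l}k$ with $M^{\l}|_{\l=1}=\id$ but $\partial_{\l}M^{\l}|_{\l=1}\neq 0$, so that $\tilde f_{\Lt}^{\l}=M^{\l}f_{\Lt}^{\l}(M^{\l})^{-1}-i\l(\partial_{\l}M^{\l})(M^{\l})^{-1}$ is a conjugation plus a translation, reducing to a pure translation only at $\l=1$. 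Since the $\Nil$ surface \eqref{eq:symNil} is built from the $\l$-derivative of $f_{\Lt}^{\l}$, differentiating the conjugation produces a commutator term $\bigl[X,f_{\Lt}\bigr]^{d}$ with $X=i\l(\partial_{\l}M^{\l})(M^{\l})^{-1}|_{\l=1}$, and it is precisely this term, linear in $x_1,x_2$, that reproduces the twisted vertical part $b_3+\tfrac12(b_1x_2-b_2x_1)$ of a left translation of $\Nil$. With your model the vertical discrepancy between $\tilde f$ and $f$ would be an honest constant, which is \emph{not} a $\Nil$-isometry unless $a_1=a_2=0$; so, taken literally, your argument for (2) would disprove the statement rather than prove it. The same issue, in milder form, touches case (1): only $M^{\l}|_{\l=1}$ and $\partial_{\l}M^{\l}|_{\l=1}$ are pinned down by the isometry at $\l=1$, and it is the second $\l$-derivative of $M^{\l}$ that produces the ``some translation'' in the statement, which a $\l$-constant $g$ cannot account for.

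For case (3) your route is genuinely different from the paper's and is attractive: the boost preserves $e^{u_{\Lt}}$, so by Lemma \ref{lm:metricrelation} it suffices to show $|\phi_3|$ changes, since an ambient isometry of $\Nil$ would preserve the induced metric $e^{u}=e^{u_{\Lt}}+4|\phi_3|^2$; the paper instead computes $\tilde f$ explicitly (its equation \eqref{eq:boost}) and compares with the four-dimensional isometry action \eqref{eq:action}. But you leave the one verification that carries the negative statement (``exhibit a boost and spinors for which $|\phi_3|$ is altered'') undone. It is doable and worth writing out: from \eqref{eq:extmini}, left multiplication by a boost sends $(\psi_1,\psi_2)$ to $(\alpha\psi_1+i\beta\overline{\psi_2},\,\alpha\psi_2+i\beta\overline{\psi_1})$, whence $\tilde\phi_3=(\alpha^2+|\beta|^2)\phi_3+2i\Re(\alpha\beta)\phi_1+2i\Im(\alpha\beta)\phi_2$ (this transformation law is recorded in the paper's remark following Theorem \ref{thm:Bern}), and generically $|\tilde\phi_3|\neq|\phi_3|$. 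Until that computation is carried out and the frame-level lift is corrected, the proposal remains a plan rather than a proof.
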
 
\begin{proof}
 Since $f_{\Lt}(= f_{\Lt}^{\l}|_{\l=1})$ and 
 $\tilde f_{\Lt}(= \tilde f_{\Lt}^{\l}|_{\l=1})$
 are isometric by a rigid motion in $\Lt$, the isometry between 
 these two surfaces lifts to the level of frames 
$F= F^{\l}|_{\l=1}$ 
 and $\tilde F = \tilde F^{\l} |_{\l=1}$ 
 as $\tilde F = MFk$, where $M$ is a $z$-independent $\ISU$-valued matrix and 
 $k$ is a $\Uone$-valued matrix. 
 After introducing the loop parameter we obtain the relation
\begin{equation}\label{eq:isoF}
 \tilde F^\lambda = M^\lambda F^\lambda k.
\end{equation}
 Note that $M^{\l}$ is a $\LISU$-valued matrix and satisfies $M^{\l}|_{\l=1} = M$.
 Then it is easy to see that $\tilde f_{\Lt}^{\l}$ and 
 $f_{\Lt}^{\l}$ satisfy the  relation:
\begin{equation}\label{eq:rigidmotion}
 \tilde f_{\Lt}^{\l} = M^\lambda f_{\Lt}^{\l} (M^\lambda)^{-1} -i \l (\partial_{\l} M^\lambda) (M^\lambda)^{-1}.
\end{equation}
 Now a straightforward computation shows that 
 the corresponding two minimal surfaces $f^{\l}$ and $\tilde f^{\l}$  
 have the following relation:
\begin{equation}\label{eq:minirelation}
 \tilde f^{\l} = \left(\ad (M^\lambda) (f^{\l}_{\Lt}) - X^{\l}\right)^{o}
 - \frac{1}{2} \lbrace \ad(M^\lambda) (i \l \partial_{\l} f^{\l}_{\Lt})
 + [X^{\l}, \ad (M^\lambda)(f^{\l}_{\Lt})] - Y^{\l}\rbrace^{d},
\end{equation}
 where we set 
\begin{equation}\label{eq:XY}
 X^{\l} = i \l (\partial_{\l} M^\lambda)(M^\lambda)^{-1}, \;\;
 Y^{\l} = i \l (\partial_{\l} X^{\l}),
\end{equation} 
 and the superscripts ``$o$'' and ``$d$'' 
 denote the off-diagonal and  diagonal part, 
 respectively. 
For simplicity of notation we do not distinguish here
 $f^{\l}$ in $\Nil$ and  $\hat f^{\l}$ in $\isu$.

 We note that for each fixed $\l \in \mathbb S^1$, 
 the first part of the right-hand side in \eqref{eq:rigidmotion}
 describes a Lorentz transformation and 
 the second part of the right-hand side in \eqref{eq:rigidmotion}
 describes a translation, respectively.  
 We now consider each of the three types of generators separately:

 $(1)$ First suppose that $\tilde f_{\Lt}$ and $f_{\Lt}$
 are isometric by a rotation around $x_3$-axis (the timelike axis).
 Since the original transformation $M$ was a rotation, it follows 
\be
M (=  M^\lambda|_{\l =1}) = {\rm diag }(e^{i \theta}, e^{-i \theta}),
\;\;\;\;
\partial_{\l} M^\lambda|_{\l =1} = \mathbf 0.
\ee
 Here $2 \theta$  is the angle of rotation. 
 A straightforward computation shows that 
 $f$ and $\tilde f$ satisfy the equation 
 $\tilde{f} = \ad(M) (f) + \frac{1}{2}Y^d$,
 where the translation term $Y$ can be computed as
\be
Y = Y^{\l}|_{\l =1} = - \l^2 (\partial_{\l}^2 M^\lambda) (M^\lambda)^{-1}|_{\l =1}.
\ee
 Therefore, this one-parameter family consists of isometric minimal surfaces 
 in $\Nil$.

 $(2)$ Next suppose  that $f_{\Lt}$ and $\tilde f_{\Lt}$
 are isometric by some translation. 
 Since the original transformation $M$  was a translation, it follows
\be
 M(= M^\lambda|_{\l =1} )= {\rm id}, \;\;(\partial_{\l} M^\lambda) |_{\l =1}\neq \mathbf 0.
\ee 
 Substituting $\lambda = 1$ into   \eqref{eq:minirelation}  we see immediately  
 that $f$ and $\tilde f$ 
 satisfy the relation $\tilde{f} = f + A$,
 where $A = A(x_1,x_2)$ is given by 
 \begin{equation}
 A =  - X^o - \frac{1}{2} \left([X, f_{\Lt}]  -Y \right)^d,
\end{equation}
 where $X= X^{\l}|_{\l=1}$ and $Y= Y^{\l}|_{\l=1}$ for $X^{\l}$ and 
 $Y^{\l}$ in \eqref{eq:XY}.
 It is clear that $Y$ is independent of $x_1$ and $x_2$, the coordinates for 
 $f$, but  $x_1$ and $x_2$ enter the commutator.
 An explicit computation, using the bases stated in Appendix \ref{sc:Sym} 
 and the transformation formula stated in Appendix \ref{sc:isoNil}, 
 now shows that $\tilde{f}$ can be obtained from $f$ by a translation in $\Nil$ 
 (with a constant vector, whose coefficients  basically are the components 
 of $X^{o}$ and of $Y^d$).

 $(3)$ Let us finally consider  the transformations $M$ given by boosts in $\Lt$. 
 These
 transformations form a two-parameter family. 
 Since the original transformation was a boost, 
 it follows  
\begin{equation}\label{eq:blement}
 M = M^\lambda|_{\l =1} = \begin{pmatrix} \alpha  & \beta \\ 
 \bar{\beta} & \alpha \end{pmatrix},
\;\;\;\;
\partial_{\l} M^{\l}|_{\l =1} = \mathbf 0.
\end{equation}
 Here $\alpha \in \R$, $\beta \in \C$ and $\alpha^2 - |\beta|^2=1$
and we obtain
\be
 \tilde f = \left(\ad (M) (f_{\Lt}) \right)^{o}
 - \frac{1}{2} \left( \ad (M) (i \l \partial_{\l} f^{\l}_{\Lt} |_{\lambda = 1})
  - Y \right)^{d},
\ee
 where $Y= Y^{\l}|_{\l=1}$ for $Y^{\l}$ in \eqref{eq:XY}.
 Now it follows by  a straightforward computation that 
\begin{equation}\label{eq:boost}
\tilde f
=\begin{pmatrix}
(\alpha^2 + |\beta|^2) r + \alpha \beta \bar{s} - \alpha \bar{\beta} s & -2 \alpha \beta p 
+ \alpha^2 q - \beta^2 \bar q   \\
2 \alpha \bar{\beta} p + \alpha^2 \bar{q} - \bar{\beta}^2  q   & -(\alpha^2 + |\beta|^2) r 
-\alpha \beta \bar {s} + \alpha \bar{\beta}  s
\end{pmatrix} 
+ \frac{1}{2}Y^{d},
\end{equation}
 where $p,r\in i \R$ and $q, s \in \C$ are functions defined by 
\begin{equation}\label{eq:pqrs}
f_{\Lt}^{\l}|_{\l=1} =\begin{pmatrix}p & q  \\\bar q & -p\end{pmatrix}\;\mbox{and}\;
- \frac{1}{2}i \l (\partial_{\l}f_{\Lt}^{\l})|_{\l=1} 
=\begin{pmatrix}r & s  \\\bar s & -r\end{pmatrix}.
\end{equation}
 Note that the components  of the minimal surface $f$ 
 in the basis of Appendix \ref{sc:Sym} are 
 given by 
\be
(x_1, x_2, x_3) =(2 \Im q, -2 \Re q, -2 \Im r).
\ee
 Thus from \eqref{eq:boost} and the action of the isometry group of $\Nil$ 
 as described in \eqref{eq:action}, it is easy to see that 
 $f$ and $\tilde f$ are in general not isometric, see Remark \ref{rm:conditions}
 in detail.
\end{proof}
\begin{Remark}\label{rm:conditions}
 In case $(3)$ of Lemma \ref{lem:isometry}, 
 from \eqref{eq:boost} and the action of the isometry group of $\Nil$ in \eqref{eq:action},
 we see that the $f$ and $\tilde f$ are isometric in $\Nil$ if and only if 
 there exist some $\theta\in \R, (a_1, a_2, a_3) \in \R^3$
 such that the following two equations hold:
\begin{eqnarray*}
(\alpha^2 + |\beta|^2)x_3  -4\alpha \Im (\beta\bar s)  - Y^{11}= 
 a x_1 + b x_2 + x_3+ a_3,&\\
-4 \alpha \beta p 
+ i (\beta^2 + \alpha^2)x_1
+ (\beta^2 - \alpha^2)x_2 
=e^{i\theta}(i x_1 -x_2) + i a_1 - a_2, 
&
\end{eqnarray*}
 where $Y^{d} = \di (i Y^{11}, -i Y^{11})$, 
 $a = \frac{1}{2} (a_1 \sin \theta - a_2 \cos \theta), 
 b =\frac{1}{2} (a_1 \sin \theta + a_2 \cos \theta)$, and 
 $p, s$ are purely imaginary, complex valued functions, respectively, 
 defined in \eqref{eq:pqrs}. 
 From these two equations,
 it is easy to see that they are satisfied for very special 
 minimal surfaces $f$ only.
\end{Remark}

\begin{Remark}
 After fixing base points, the Sym-formula establishes a 
 $1$-$1$-relation between spacelike {\sc CMC} with mean curvature $H=1/2$ 
 surfaces in $\Lt$ and minimal surfaces in $\Nil$. 
 Clearly, the Poincar\'e group $\ISU \ltimes \Lt$ acts on the family 
 of spacelike {\sc CMC} 
 surfaces in $\Lt$.
 If we fix base points, we eliminate the action of the translation part of 
 the Poincar\'e group, reducing the action to the Lorentz group $\ISU$.

 Via the Sym-formula, the Poincar\'e group also acts on the family of minimal 
 surfaces in $\Nil$.  Since we fix base points, we can also eliminate the translation 
 part of the isometry group of $\Nil$. So generically, the dimension of the 
 family of minimal surfaces should be three.
 But from Lemma \ref{lem:isometry}, identifying 
 minimal surfaces which are isometric 
 by rotations, we see that two is the highest dimension of any orbit. These orbits are realized by the action  of boosts.
From \eqref{eq:blement}, the set of boosts $\mathcal B$ can be computed as 
\be
\mathcal B = \left\{ \left.X \, {}^t\!\bar X \;\right|\; X \in \ISU \right\}. 
\ee
 From this it is clear that $\mathcal B$ is the symmetric space 
 $\ISU/\Uone$.
\end{Remark}

\subsection{Bernstein problem}\label{sc:Bernstein}
 We will finally present a short  alternative proof of 
 the Bernstein problem in $\Nil$ using the loop group method. 
 The heart of the proof is the following simple relation between 
 spacelike {\sc CMC} graphs in $\Lt$ and minimal graphs in $\Nil$.
\begin{Theorem}\label{thm:symcorrespond}
 Every entire, complete, spacelike {\sc CMC} graph  in $\Lt$ 
 with mean curvature $H =1/2$ and the Hopf differential $Q_{\Lt} dz^2$ induces, 
 via the Sym-formula
 $($applied to its associated family$)$, an entire, 
 complete, minimal graph in $\Nil$ with  Abresch-Rosenberg differential
 $-Q_{\Lt} dz^2$.

 Conversely every entire, complete, minimal graph in $\Nil$ is obtained in this way.
\end{Theorem}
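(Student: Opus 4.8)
The plan is to prove the two implications separately, deriving completeness and entireness from Theorem~\ref{thm:compl} together with the elementary observation that an entire graph is closed in the Euclidean topology, and reserving the genuine work for the identification of the quadratic differentials.

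For the forward implication, I would begin with an entire, complete, spacelike {\sc CMC} graph $f_{\Lt}$ in $\Lt$ with $H=1/2$ and Hopf differential $Q_{\Lt}\,dz^2$. Since $f_{\Lt}$ is a graph over the whole spacelike $(x_1,x_2)$-plane, it is closed with respect to the Euclidean topology, because the graph of a smooth function defined on all of $\R^2$ is a closed subset of $\R^3$. Through its harmonic Gauss map into $\mathbb H^2$ and the loop group set-up of \cite{DIK:mini}, the surface $f_{\Lt}$ determines an extended frame $F^{\l}$, and $f_{\Lt}$ is the member $\l=1$ of the associated family $\{f^{\l}_{\Lt}\}_{\l\in\mathbb S^1}$ obtained from $F^{\l}$ by the Sym-formula of Theorem~\ref{thm:Sym}. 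As one member of this family is closed, Theorem~\ref{thm:compl} immediately gives that every member $f^{\l}$ of the associated family of minimal surfaces in $\Nil$ defined from $F^{\l}$ is a complete, entire graph. It then remains to compute the Abresch--Rosenberg differential of $f^{\l}$; I would read off both the Hopf differential of $f^{\l}_{\Lt}$ and the Abresch--Rosenberg differential of $f^{\l}$ from the common holomorphic data encoded in $F^{\l}$ via the explicit formulas of \cite{DIK:mini}, and verify that they agree up to the sign forced by the Lorentzian signature, yielding $-Q_{\Lt}\,dz^2$.

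For the converse, I would start from an entire, complete minimal graph $f=f^{\l}|_{\l=1}$ in $\Nil$. By the generalized Weierstrass representation of \cite{DIK:mini} it carries an extended frame $F^{\l}$, and feeding $F^{\l}$ into the $\Lt$ Sym-formula of Theorem~\ref{thm:Sym} produces a spacelike {\sc CMC} surface $f_{\Lt}=f^{\l}_{\Lt}|_{\l=1}$ with $H=1/2$. By the inspection of \eqref{eq:SymMin} and \eqref{eq:symNil} already carried out in the proof of Theorem~\ref{thm:compl}, the surfaces $f_{\Lt}$ and $f$ share the same $x_1$- and $x_2$-components; hence $f$ being an entire graph over the $(x_1,x_2)$-plane forces $f_{\Lt}$ to be one as well. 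As above, this entire graph is closed in the Euclidean topology, so by \cite[p.~415]{CY} it is complete. Thus $f_{\Lt}$ is an entire, complete, spacelike {\sc CMC} graph whose Sym image is, by construction, precisely $f$; applying the forward implication to $f_{\Lt}$ exhibits $f$ in the asserted form, with the Abresch--Rosenberg differential of $f$ and the Hopf differential of $f_{\Lt}$ related by the same sign as before.

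The transfer of completeness and entireness is thus cheap, being subsumed in Theorem~\ref{thm:compl} and the closedness of entire graphs; the one delicate point I expect is the equality of the Abresch--Rosenberg differential with $-Q_{\Lt}\,dz^2$. This amounts to matching two holomorphic quadratic differentials extracted from the same loop-group data---one attached to a surface in the Riemannian model $\Nil$, the other to a surface in the Lorentzian model $\Lt$---and carefully tracking the sign introduced by the indefinite metric. I would treat this as the main computation, using the spinor description recalled in Lemma~\ref{lm:metricrelation} and the appendix, rather than any analytic completeness estimate.
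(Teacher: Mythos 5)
Your proposal follows essentially the same route as the paper: entireness is transferred via the shared $x_1$-, $x_2$-components of the two Sym-formula images, completeness via Theorem \ref{thm:compl} (with closedness of an entire graph feeding into \cite[p.~415]{CY}), and the sign relation between the Hopf and Abresch--Rosenberg differentials is exactly the content of Remark \ref{rm:AppRem}, so the ``main computation'' you anticipate is already recorded there. The one place the paper is more careful is that the $\lambda=1$ member of the family produced by the Sym-formula is a priori only congruent to the given graph by a rigid motion of $\Lt$ --- the paper handles this with the fundamental theorem of surface theory and \cite[Proposition 1]{Wan} --- but since rigid motions of $\Lt$ preserve closedness in the Euclidean topology, your argument survives this ambiguity.
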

\begin{proof}
 Let $g_{\Lt}$ be an  entire complete  
 spacelike {\sc CMC} graph with mean curvature $H=1/2$
 over the $(x_1,x_2)$-plane 
 in $\Lt$ whose Hopf differential   
 is $Q_{\Lt}dz^2$. 
 Let $F^{\l}$ be the  extended frame of $g_{\Lt}$, 
 see Remark \ref{rm:AppRem} for 
 the definition, and 
 apply the Sym-formulas of Theorem \ref{thm:Sym} to obtain  $f^{\l}_{\Lt}$ 
 and $f^{\l}$ from the same extended frame $F^{\l}$. 
 Note that $f^{\l}_{\Lt}$ and $f^{\l}$ define an associated family of 
 spacelike {\sc CMC} surfaces in $\Lt$ and minimal surfaces in $\Nil$, respectively.
Moreover,  $f_{\Lt}= f_{\Lt}^{\l}|_{\l =1}$ and $g_{\Lt}$ are isometric by 
 some rigid motion in $\Lt$, by the fundamental theorem of surface theory 
 (the mean curvature $H$, the Hopf differential $Q_{\Lt} dz^2$ and 
 the metric $e^{u_{\Lt}} dz d \bar z$ are the same),
 thus $f_{\Lt}$ is also a entire, complete,  
 spacelike {\sc CMC} graph, \cite[Proposition 1]{Wan}. 
 From  formulas  \eqref{eq:SymMin} and \eqref{eq:symNil} 
 we infer by inspection that $f^{\l}$ and $f^{\l}_{\Lt}$ share the 
 same $x_1$-, $x_2$-components. Thus $f= f^{\l}|_{\l =1}$ 
 is an entire minimal graph as well. 
 Moreover, from Theorem \ref{thm:compl} we obtain that $f(= f^{\l}|_{\l =1})$ 
 is complete and by Remark \ref{rm:AppRem} 
 we know that  the Abresch-Rosenberg differential is $-Q_{\Lt} dz^2$. 

 To verify the second statement, 
 let $f$ be an entire, complete, minimal graph in $\Nil$ whose 
 Abresch-Rosenberg differential is $Q \,dz^2$ and let 
 $F^{\l}$ be the extended frame of $f$ and $f^\lambda$ its associated family
 from the extended frame $F^{\l}$. 
 Then we have $f=f^{\l}|_{\l =1}$ up to translation in $\Nil$. 
 Moreover, let $f_{\Lt}^{\l}$ 
 be the spacelike {\sc CMC} surface in $\Lt$ defined by the same extended 
 frame $F^{\l}$
 in \eqref{eq:SymMin}. 
 Note that the Hopf differential of $f_{\Lt}^{\l}$ is $
 Q_{\Lt}^{\l} dz^2 = -\l^{-2} Q \,dz^2$.
 Since $f$ and  $f_{\Lt}^{\l}|_{\l =1}$ have the same $x_1$-,$x_2$-components, 
 the latter surface is an entire {\sc CMC} 
 graph in $\Lt$, and thus by \cite[p.~415]{CY} it is complete.
\end{proof}
 Using Theorem \ref{thm:symcorrespond}, 
 it is easy to give the proof of the solution to the  Bernstein problem.
\begin{Theorem}\label{thm:Bern}
 Let  $Q \,dz^2$ be a holomorphic quadratic differential 
 on $\D$ or $M = \C$ with $Q \not\equiv 0$. Then the following statements hold$:$
\begin{enumerate}
\item There exists a two-parameter family of entire, 
 complete, minimal graphs in $\Nil$, whose Abresch-Rosenberg 
 differential is $Q\, dz^2$. 

\item Any two members of this two-parameter family are generically 
  non-congruent.

\item Each member of this two-parameter  family  is induced via the Sym-formula 
 by $($the associated family of$)$ an entire, complete, spacelike {\sc CMC} graph 
 in $\Lt$ with the Hopf differential $-Q \,dz^2$. 
\end{enumerate}
\end{Theorem}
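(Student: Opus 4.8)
The plan is to reduce the entire assertion to the Bernstein-type theorem of Wan and Au for spacelike {\sc CMC} surfaces in $\Lt$ and then transport the conclusion to $\Nil$ via Theorem \ref{thm:symcorrespond}, with the two-parameter ambiguity produced by the boosts. The only bookkeeping to fix at the outset is the sign: since I want a minimal graph in $\Nil$ with Abresch-Rosenberg differential $Q\,dz^2$, and since Theorem \ref{thm:symcorrespond} turns a spacelike {\sc CMC} graph with Hopf differential $Q_{\Lt}\,dz^2$ into a minimal graph with Abresch-Rosenberg differential $-Q_{\Lt}\,dz^2$, the relevant {\sc CMC} data is the quadratic differential $-Q\,dz^2$, which is again holomorphic, nonzero, and defined on $\D$ or on $\C$.

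For existence, and hence for statements $(1)$ and $(3)$ simultaneously, I would first apply the result of Wan and Au \cite{Wan, WanAu} to $-Q\,dz^2$ to obtain an entire, complete, spacelike {\sc CMC} graph $g_{\Lt}$ in $\Lt$ with $H=1/2$ whose Hopf differential is $-Q\,dz^2$. To manufacture a genuine two-parameter family I would then act on $g_{\Lt}$ by the two-parameter family of boosts $\mathcal B\cong\ISU/\Uone$ identified in the Remark following Lemma \ref{lem:isometry}. Each boost is an orientation-preserving isometry of $\Lt$, so it preserves both the mean curvature and the Hopf differential and, being an isometry, preserves completeness; invoking the equivalence between completeness and the entire-graph property for spacelike {\sc CMC} surfaces $($\cite[p.~415]{CY} together with \cite[Proposition 1, Proposition 2]{Wan}$)$, every image is again an entire, complete, spacelike {\sc CMC} graph with Hopf differential $-Q\,dz^2$. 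Feeding each member of this family into Theorem \ref{thm:symcorrespond} then yields a two-parameter family of entire, complete, minimal graphs in $\Nil$, all carrying the prescribed Abresch-Rosenberg differential $Q\,dz^2$, which is exactly $(1)$, while the construction itself records $(3)$.

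For statement $(2)$ I would invoke Lemma \ref{lem:isometry}$(3)$: two members of the family above correspond, before applying the Sym-formula, to two spacelike {\sc CMC} graphs differing by a boost, and that lemma asserts that the associated minimal surfaces in $\Nil$ are in general not congruent. The explicit congruence criterion in Remark \ref{rm:conditions} shows that congruence would force the displayed pair of rigid equations to hold, which occurs only for very special surfaces; hence two generic members of the family are non-congruent, and in particular the boost orbit does not collapse, so the family is genuinely two-dimensional.

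The main obstacle is not existence, which is handed to us by Wan and Au, but the control of the correspondence under the passage to $\Nil$: one must ensure that acting by the two-dimensional family $\mathcal B$ before applying the Sym-formula produces generically distinct, non-congruent surfaces in $\Nil$, rather than a lower-dimensional or degenerate family. This is precisely the content of Lemma \ref{lem:isometry}$(3)$ and Remark \ref{rm:conditions}, so the substantive geometric work is already carried out there; what remains for the proof itself is the routine verification of the sign normalization and of the completeness/entire-graph equivalence used to keep the boosted surfaces within the class of graphs.
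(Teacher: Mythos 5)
Your proposal is correct and follows essentially the same route as the paper: apply Wan--Au to the Hopf differential $-Q\,dz^2$ to get the (unique up to isometry) entire complete spacelike {\sc CMC} graph in $\Lt$, transport it to $\Nil$ via Theorem \ref{thm:symcorrespond}, and generate the two-parameter family by the boost orbit, with non-congruence supplied by Lemma \ref{lem:isometry}$(3)$ and Remark \ref{rm:conditions}. The paper merely phrases the boost action at the level of extended frames ($\tilde F^{\l}=M^{\l}F^{\l}k$) and explicitly checks that rotations and translations in $\Lt$ yield congruent minimal surfaces, which your argument subsumes.
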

\begin{proof}
 First we note that it  is known that for 
 a given holomorphic quadratic differential 
 $Q_{\Lt}dz^2$ on $\mathbb D$ or $\mathbb C$, 
 there exists a unique entire complete  
 spacelike {\sc CMC} graph $g_{\Lt}$ over the $(x_1,x_2)$-plane 
 in $\Lt$ whose Hopf differential   
 is $Q_{\Lt}dz^2$, \cite{Wan, WanAu}.  
 Here ``unique'' means that any other such spacelike {\sc CMC} 
 graph whose Hopf differential 
 is $Q_{\Lt} dz^2$ is isometric to $g_{\Lt}$
 by an isometry of $\Lt$. We normalize the mean curvature of $g_{\Lt}$ 
 as $H =1/2$ and set $Q_{\Lt} dz^2= -Q \,dz^2$, where $Q \,dz^2$ is the quadratic 
 differential satisfying the condition in the Theorem.
 Let $F^{\l}$ be the  extended frame of $g_{\Lt}$, see Remark \ref{rm:AppRem} for 
 the definition, 
 and apply the Sym formulas of Theorem \ref{thm:Sym} to obtain  $f^{\l}_{\Lt}$ 
 and $f^{\l}$ from the same extended frame $F^{\l}$. 
 From Theorem \ref{thm:symcorrespond} we know that $f_{\Lt}=f^{\l}_{\Lt}|_{\l=1}$ 
 and $f= f^{\l}|_{\l=1}$ are complete entire graphs.
 From the construction it is clear that $f$ is a minimal surface. 
 Moreover the Abresch-Rosenberg differential of $f$ is $Q \, dz^2$.

 We now consider a spacelike {\sc CMC} surface $\tilde g_{\Lt}$ isometric 
 to $g_{\Lt}$ in $\Lt$. Then as explained in the proof of Lemma \ref{lem:isometry},
  the extended frame $\tilde F^{\l}$ of $\tilde g_{\Lt}$ 
 satisfies $\tilde F^{\l} = M^{\l} F^{\l} k$ for 
 some $z$-independent $\LISU$-valued matrix $M^{\l}$ and a $\Uone$-valued matrix $k$, 
 in particular independent of $\lambda$. For the associated family 
 $\tilde f_{\Lt}^\lambda$ of $\tilde g_{\Lt}$ which is defined  
 by the Sym formula \eqref{eq:SymMin} from the extended frame $\tilde F^{\l}$, 
 we see that $\tilde f_{\Lt}= \tilde f^{\l}_{\Lt}|_{\l =1}$ and $\tilde g_{\Lt}$ 
 are isometric.
 Thus $f_{\Lt}$ and $\tilde f_{\Lt}$ are isometric, 
 and again from \cite[Proposition 1]{Wan} we obtain that  $\tilde f_{\Lt}$ 
 is an entire, complete, spacelike {\sc CMC} graph.
 Let $\tilde f^{\l}$ 
 be the corresponding associated family of minimal surfaces in $\Nil$ which is defined  
 by the Sym formula \eqref{eq:symNil} from
 the extended frame $\tilde F^{\l}$. Then using the argument in 
 Theorem \ref{thm:symcorrespond}, we see that 
 $\tilde f = \tilde f^{\l}|_{\l =1}$ is an entire, complete minimal graph in $\Nil$.
 Note that the Abresch-Rosenberg differential of $\tilde f$ is also $Q \, dz^2$.

 We now apply Lemma \ref{lem:isometry}.  
 If the isometry $f_{\Lt}$ and $\tilde f_{\Lt}$ is of case $(1)$ or $(2)$, 
 then $\tilde f (=\tilde f^{\l}|_{\l =1})$ is congruent to $f(=f^{\l}|_{\l=1})$.
 However, if the isometry of $f_{\Lt}$ and 
 $\tilde f_{\Lt}$ is of case $(3)$, then $\tilde f$ is
 in general non-congruent to $f$. In particular, the case $(3)$ in 
 Lemma \ref{lem:isometry} corresponds to a two-parameter family of boosts in $\Lt$.
 Therefore, for an entire, complete, 
 spacelike CMC surface, there exists a two-parameter family of 
 non-congruent complete minimal graphs in $\Nil$ which have the same 
 Abresch-Rosenberg differential $Q dz^2$. 
\end{proof}

\begin{Remark}
 In \cite{Fer-Mira2}, the two-parameter family of 
 an entire, complete, minimal graph was obtained by the 
 choice of the initial condition for a nonlinear partial differential equation.
 The solution corresponds to $\phi_3$, that is, the $e_3$-component of 
 $f^{-1} f_z$ and the initial condition is the initial value 
 $\phi_3(z_*)$ for some base point $z_*$ in $\mathbb C$ or $\mathbb D$. 
 In our setting, this freedom naturally appears as the 
 two-parameter family of boosts in $\Lt$: 
 As we see from the proof of Theorem \ref{thm:Bern}, two minimal surfaces 
 $f^{\l}$ and $\tilde f^{\l}$ satisfy the relation  \eqref{eq:minirelation}.
 Set 
\be
 M^{\l}|_{\l =1} = 
 \begin{pmatrix}
 \alpha & \beta \\
 \bar{\beta} & \alpha
 \end{pmatrix}
 \in \ISU,\;\;
 (f^{\l})^{-1}f^{\l}_{z}|_{\l=1} = \sum_{k= 1}^3 \phi_k e_k, \;\mbox{and}\;
 (\tilde f^{\l})^{-1}\tilde f^{\l}_{z}|_{\l=1} = \sum_{k= 1}^3 
 \tilde \phi_k e_k,
\ee
 where $\alpha$ is real.
 Then a straightforward computation (using the proof of \cite[Theorem 6.1]{DIK:mini}) 
 shows 
 \begin{equation}
 \tilde \phi_3  = (\alpha^2 + |\beta|^2) \phi_3 + 2 i\Re (\alpha \beta)\phi_1 
 + 2 i \Im (\alpha \beta) \phi_2.
 \end{equation}
 From this expression, it is clear that our two-parameter family of boosts 
 induces a freedom of the initial condition of $\phi_3$ which is naturally 
 parametrized by $\C$.
\end{Remark}
\begin{Corollary}
 The associated family of every entire, complete, minimal graph in $\Nil$ 
 with a given Abresch-Rosenberg differential $Q \,dz^2$ 
 is a family of entire, complete, minimal graphs in $\Nil$
 with the Abresch-Rosenberg differential $\l^{-2} Qdz^2\;(\l \in \mathbb S^1)$.
 Moreover, within a given  associated family, complete minimal graphs 
 have the same support  $h \,(dz)^{1/2} (d \bar z)^{1/2}$.
\end{Corollary}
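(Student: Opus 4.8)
The plan is to assemble the Corollary from the results already in place, since all of the genuinely analytic work has been carried out earlier. I would start with an entire, complete minimal graph $f$ in $\Nil$ whose Abresch-Rosenberg differential is $Q\,dz^2$, let $F^{\l}$ be its extended frame, and form the two associated families $f^{\l}$ (minimal surfaces in $\Nil$) and $f_{\Lt}^{\l}$ (spacelike {\sc CMC} surfaces in $\Lt$) by applying the two Sym-formulas \eqref{eq:symNil} and \eqref{eq:SymMin} to the same frame $F^{\l}$. As in the proof of Theorem \ref{thm:symcorrespond}, the surface $f_{\Lt}=f_{\Lt}^{\l}|_{\l=1}$ shares its $x_1$-, $x_2$-components with $f$ and is therefore an entire, complete, spacelike {\sc CMC} graph.

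For the completeness and graph property of the remaining members I would invoke Theorem \ref{thm:compl} directly. Since $f_{\Lt}$ is an entire graph, it is a closed subset of $\Lt$ in the Euclidean topology, so the hypothesis of Theorem \ref{thm:compl} is met by the member $f_{\Lt}^{\l}|_{\l=1}$ of the family $\{f_{\Lt}^{\l}\}_{\l\in\mathbb S^1}$. The theorem then yields at once that every $f^{\l}$ is a complete, entire minimal graph in $\Nil$.

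The identification of the Abresch-Rosenberg differential is the one point needing care. Here I would use that, as recorded in the proof of Theorem \ref{thm:symcorrespond}, the Hopf differential of the member $f_{\Lt}^{\l}$ equals $-\l^{-2}Q\,dz^2$; this is the standard $\l$-dependence of the Hopf differential along an associated family. The correspondence supplied by the two Sym-formulas is pointwise in $\l$: for each fixed $\l$ the Abresch-Rosenberg differential of $f^{\l}$ is the negative of the Hopf differential of $f_{\Lt}^{\l}$ (Remark \ref{rm:AppRem}), exactly as at $\l=1$. Combining the two gives that the Abresch-Rosenberg differential of $f^{\l}$ is $\l^{-2}Q\,dz^2$, as claimed. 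The main obstacle is precisely this $\l$-wise validity of the Abresch-Rosenberg/Hopf correspondence; one could make it rigorous by viewing $f^{\l_0}$ as the $\l=1$ member of the reparametrized family built from $F^{\l\l_0}$, so that the $\l=1$ statement of Theorem \ref{thm:symcorrespond} applies verbatim.

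Finally, for the statement about the support I would read off the metric relation \eqref{eq:supportmetric}, namely $h^2\,dz\,d\bar z=e^{u_{\Lt}}\,dz\,d\bar z$, together with Remark \ref{rm:metrics}, which asserts that the {\sc CMC} metric $e^{u_{\Lt}}\,dz\,d\bar z$ is independent of $\l$ across the associated family. Hence $h$ does not depend on $\l$, so all members of a fixed associated family share the same support $h\,\sdz\,\sdzb$. Once the $\l$-wise correspondence above is granted, the remaining assertions are immediate consequences of Theorem \ref{thm:compl}, equation \eqref{eq:supportmetric}, and Remark \ref{rm:metrics}.
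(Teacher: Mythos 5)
Your proposal is correct and follows essentially the same route as the paper: the graph and completeness assertions are reduced to the associated family of spacelike {\sc CMC} surfaces in $\Lt$ (Cheng--Yau, Wan, the $\l$-independence of $h^2\,dz\,d\bar z$, and Lemma \ref{lm:metricrelation}), and the claims about the differential and the support come from Remark \ref{rm:AppRem} and \eqref{eq:supportmetric}. The only cosmetic difference is that you invoke Theorem \ref{thm:compl} as a black box after verifying its closedness hypothesis via the entire graph $f_{\Lt}$, whereas the paper re-runs that argument inline and cites the proof of Theorem \ref{thm:Bern} for the $\l^{-2}Q\,dz^2$ and support statements; the $\l$-wise validity of the Hopf/Abresch--Rosenberg correspondence that you flag as the main obstacle is already recorded in Remark \ref{rm:AppRem} and Corollary \ref{coro:associate}, so your reparametrization trick, while valid, is not needed.
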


\begin{proof}
 From the proof of Theorem \ref{thm:Bern}, it is clear that for an  entire minimal graph 
 $f^{\l}|_{\l =1}$ all members of its  associated family of minimal surfaces 
 have  the Abresch-Rosenberg differential $\l^{-2} Q dz^2 \;(\l \in \mathbb S^1)$ 
 and the same support $h\,(dz)^{1/2} (d \bar z)^{1/2}$.
 To prove that the minimal surfaces in the associated family are graphs, we consider the spacelike {\sc CMC} surfaces $f_{\Lt}^{\l}$ given in the 
 proof of Theorem \ref{thm:Bern}. 
Then, since $f_{\Lt}^{\l}|_{\l=1}$ is entire, 
 it is complete \cite[p.~415]{CY} and thus the spacelike {\sc CMC} 
 surfaces $f_{\Lt}^{\l}$ 
 in the associated family are also complete. They are in fact isometric to 
 $f_{\Lt}^{\l}|_{\l=1}$. Note that the complete metric is given by 
 $h^2\, dz d \bar z$, see \eqref{eq:supportmetric}. 
Therefore,  by \cite[Proposition 1]{Wan}, all
 $f_{\Lt}^{\l}$ are entire graphs, and thus the corresponding minimal surfaces
 $f^{\l}$ in $\Nil$ are also 
 entire graphs. 
 The completeness of the associated family follows from Lemma \ref{lm:metricrelation}.
\end{proof}

\begin{Remark}[Canonical examples]
 In \cite{Fer-Mira2}, all entire, complete, minimal vertical graphs 
 are called the {\it canonical examples}. 
\end{Remark}

\appendix
\section{Isometry group of the three-dimensional Heisenberg group}\label{sc:isoNil}
 \subsection{}
 The identity component $\mathrm{Iso}_{\circ}(\Nil)$
 of the isometry group of $\Nil$ is the semi-direct product 
 $\Nil \ltimes \mathrm{SO}_2$. If we identify $\Nil$ with $\C \times \R$ 
 and $\mathrm{SO}_2$ with $\mathrm{U}_1$, respectively, 
 then the action of $\Nil\ltimes \mathrm{SO}_2 \left(\cong 
 (\C \times \R) \ltimes \mathrm{U}_1 \right)$ is given by
\begin{equation}\label{eq:action}
((\alpha=a_1 + i a_2, \;a_3), \; e^{i \theta}) \cdot \left(z=x_1 + i x_2, \;x_3\right)
 =\left(e^{i \theta} z + \alpha, \;x_3+\tfrac{1}{2}\Im ( \bar \alpha e^{i \theta} z)+ a_3\right),
\end{equation}
 where $\theta, a_3, x_3 \in \R$ and $\alpha, z \in \C$.
Here $(x_1, x_2, x_3)$ is a coordinate system of $\Nil$, 
 $\theta$ is a rotation angle and $(a_1, a_2, a_3)$
 is a translation vector.
 The Heisenberg group $\Nil$ is represented by
 $(\Nil\ltimes \mathrm{SO}_2)/\mathrm{SO}_2$ as 
 a naturally reductive homogeneous space. One can see that 
 this homogeneous space is not Riemannian symmetric.
\section{Basic Results}
\label{sc:Preliminaries}
\subsection{Basic notation}
 Let $\Nil$ be the three-dimensional Heisenberg group with the 
 bundle curvature $\tau =1/2$ and let $f : M \to \Nil$ 
 a conformal immersion of a Riemann surface $M$ into $\Nil$.
 Denote  the orthonormal basis 
 of the Lie algebra of $\Nil$ by $\{e_1, e_2, e_3\}$. 
 Then the Maurer-Cartan form $f^{-1} d f$ can be 
 expanded as $f^{-1} d f = (f^{-1} f_z) dz + (f^{-1} f_{\bar z}) d \bar z$
 with $ f^{-1} f_z = \sum_{k =1}^3 \phi_k e_k$
 and $f^{-1} f_{\bar z} =\overline{f^{-1} f_z}  = \sum_{k =1}^3 \bar \phi_k e_k$.
 Here $(z= x + i y)$ are conformal coordinates, $\bar z= x - i y$ is its complex 
 conjugate, and the subscripts $z$ and $\bar z$ denote
 the partial differentiations with respect to $z$ and $\bar z$, respectively. 
 Moreover $\phi_k$ is a complex-valued function and $\bar \phi_k$
 is the its complex conjugate function.
 Since $f$ is a conformal immersion, it is easy to see that $\phi_k (k =1, 2, 3)$
 satisfy $\sum_{k=1}^3 \phi_k^2 =0$ and $\sum_{k =1}^{3}|\phi_k|^2 = \frac{1}{2} 
 e^u\neq 0$. 
 We note that the induced metric of $f$ is given by $ds^2 = e^u dz d \bar z$.
 Then using the {\it generating spinors} $\psi_1$ and $\psi_2$, the first equation 
 can be solved by
 \be
 \phi_1 = (\overline{\psi_2})^2 - \psi_1^2, \;\; 
 \phi_2 = i ((\overline{\psi_2})^2 + \psi_1^2), \;\; 
 \phi_3 = 2 \psi_1\overline{\psi_2}.
 \ee
 Then the condition $\sum_{k=1}^3 |\phi_k|^2 = e^u/2$ is equivalent with 
 $e^u = 4 (|\psi_1|^2 + |\psi_2|^2)^2$.
 Let $N$ be the positively oriented unit normal vector field along $f$ and 
 denote  an unnormalized normal vector field $L$ by $L = e^{u/2} N$. We 
 define the {\it support} $h (dz)^{1/2} (d\bar z)^{1/2}$ by $h = \langle f^{-1} L, e_3\rangle$. 
 Then it is easy to compute $h$ by the generating spinors $\psi_1$ and $\psi_2$: 
 $h = \langle f^{-1}L, e_3\rangle = 2 (|\psi_1|^2-|\psi_2|^2)$.
 Moreover, let $e^{w/2}$ and $Q \,dz^2= 4 B\, dz^2$ be the Dirac potential and
 the Abresch-Rosenberg differential \cite{Abresch-Rosenberg}, given by 
\be
 e^{w/2} = \mathcal U = \mathcal V = -\frac{H}{2}e^{u/2} + \frac{i}{4} h, 
 \hspace{2mm} \mbox{and} \hspace{2mm}  B = \frac{2 H + i}{4} \left(\langle f_{zz}, N \rangle + \frac{\phi_3^2}{2 H + i}\right),
\ee
respectively.
 It is known that the vector of  generating spinors $\tilde \psi = (\psi_1, \psi_2)$ 
 satisfies the so-called ``linear spinor system'' \cite{DIK:mini}:
\begin{equation}\label{eq:linspinor}
 \tilde \psi_z = \tilde \psi \tilde U , \;\;
 \tilde \psi_{\bar z} =  \tilde \psi \tilde V, \;\;
\end{equation}
 where 
 \be 
 \tilde U =
 \begin{pmatrix}
 \frac{1}{2} w_z  + \frac{1}{2} H_{z} e^{-w/2+u/2}&
 - e^{w/2} \\ 
 B e^{-w/2} & 0
 \end{pmatrix}, \;\;
 \tilde V =
 \begin{pmatrix}
 0 & - \bar B e^{-w/2}\\
 e^{w/2} & \frac{1}{2}w_{\bar z}+\frac{1}{2} H_{\bar z}e^{- w/2 + u/2}
 \end{pmatrix}.
\ee
 We note that the second column of the first equation and 
 the first column of the second equation together 
 are the nonlinear Dirac equations, 
 that is, 
 \be 
 \partial_z \psi_2 = - \mathcal U \psi_1, \;\;
 \partial_{\bar z} \psi_1 =  \mathcal V \psi_2,
\ee
 where $\mathcal U = \mathcal V = e^{w/2}$.

\subsection{Flat connections}

 From now on we assume that the unit normal  $f^{-1} N$ is upward, that is, 
 the $e_3$-component of $f^{-1} N$ is positive.
 Since $f^{-1} N$ is upward, there is a stereographic projection $\pi$ of 
 the unit normal $f^{-1} N$ from the south pole to the unit disk in $\mathbb R^2$.
 We denote the map $\pi \circ f^{-1} N$ by $g$ and call $g$ the {\it normal 
 Gauss map}.
 Then it is easy to see that $g$ can be represented by the generating 
 spinors as 
\be
 g = \frac{\psi_2}{\overline{\psi_1}}.
\ee
 We now define the family of Maurer-Cartan forms $\alpha^{\lambda}$ as
 $\alpha^{\l} = \tilde U^{\l} dz + \tilde V^{\l} d\bar z$ with 
 \begin{equation}\label{eq:U-V1lambda}
 \tilde U^{\l} =
 \begin{pmatrix}
 \frac{1}{4} w_z  + \frac{1}{2} H_{z} e^{-w/2+u/2}&
 - \l^{-1}e^{w/2} \\ 
 \l^{-1}B e^{-w/2} &  -\frac{1}{4} w_z
 \end{pmatrix}, \;\;
 \tilde V =
 \begin{pmatrix}
  -\frac{1}{4} w_{\bar z} & - \l \bar B e^{-w/2}\\
 \l e^{w/2} & \frac{1}{4}w_{\bar z}+\frac{1}{2} H_{\bar z}e^{- w/2 + u/2}
 \end{pmatrix}.
 \end{equation}
 Then minimal surfaces in $\Nil$ are characterized in terms of 
 the normal Gauss map as follows.
 \begin{Theorem}[Theorem 5.3 in \cite{DIK:mini}]\label{thm:mincharact}
 Let $f : \D \to \Nil$ be a conformal immersion which is 
 nowhere vertical and $\alpha^{\l}$
 the $1$-form defined in \eqref{eq:U-V1lambda}.
 Moreover, assume that the unit normal $f^{-1} N$ is upward.
 Then the following statements are equivalent{\rm:}
 \begin{enumerate}
 \item $f$ is a minimal surface.
 \item $d + \alpha^{\l}$ is a family of flat connections on $\D \times  \ISU$.
 \item The normal Gauss map $g$ for $f$ is a non-conformal harmonic 
       map into the hyperbolic two-space $\mathbb H^2$.
 \end{enumerate}
 \end{Theorem}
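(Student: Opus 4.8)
The plan is to split the equivalence into the two nontrivial links $(2)\Leftrightarrow(3)$ and $(1)\Leftrightarrow(3)$. The first is an instance of the general loop-group description of harmonic maps into a Riemannian symmetric space, whereas the second is the geometric heart, a Ruh--Vilms/Kenmotsu-type statement special to $\Nil$. Throughout I would work with the generating spinors $\tilde\psi=(\psi_1,\psi_2)$, the linear spinor system \eqref{eq:linspinor} together with the nonlinear Dirac equations $\partial_z\psi_2=-\mathcal U\psi_1$, $\partial_{\bar z}\psi_1=\mathcal V\psi_2$, the conformal factor $e^u=4(|\psi_1|^2+|\psi_2|^2)^2$, the support $h=2(|\psi_1|^2-|\psi_2|^2)$, and the normal Gauss map $g=\psi_2/\overline{\psi_1}$. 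Because $f^{-1}N$ is assumed upward, $h>0$, hence $|g|<1$, so $g$ really lands in the Poincar\'e disc model of $\mathbb{H}^2=\ISU/\Uone$ and the normalized spinors assemble into an $\ISU$-valued frame.

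For $(2)\Leftrightarrow(3)$ I would use the Cartan decomposition $\isu=\mathfrak{k}\oplus\mathfrak{m}$ of the symmetric pair $(\ISU,\Uone)$, where $\mathfrak{k}$ is the diagonal ($\Uone$) part and $\mathfrak{m}$ the off-diagonal part; this is exactly the splitting that underlies the $\sigma$-twisting of the loop algebra $\lsl$. First I would build from the normalized spinors an $\ISU$-frame $F$ projecting to $g$, pull back the Maurer--Cartan form $F^{-1}dF$, and decompose it into its $\mathfrak{k}$-part and the $(1,0)$- and $(0,1)$-parts $\mathfrak{m}'$, $\mathfrak{m}''$ of its $\mathfrak{m}$-component. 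Deforming by inserting $\l^{-1}$ in front of the $\mathfrak{m}'$-part and $\l$ in front of the $\mathfrak{m}''$-part produces a one-parameter family $\alpha^{\l}$, and the classical theorem (Pohlmeyer--Uhlenbeck, in the form used for DPW) asserts that $g$ is harmonic precisely when $d+\alpha^{\l}$ is flat for all $\l\in\mathbb S^1$. The remaining work is identification and reality: I would check that, up to a $\Uone$-gauge, this $\alpha^{\l}$ coincides with the $1$-form of \eqref{eq:U-V1lambda}, the off-diagonal entries carrying $\l^{\mp1}$ and the diagonal being $\l$-free exactly as written, and that for $\l\in\mathbb S^1$ the twisted-loop reality condition holds so that $\alpha^{\l}$ is genuinely $\isu$-valued, i.e.\ $d+\alpha^{\l}$ is a connection on $\D\times\ISU$.

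For $(1)\Leftrightarrow(3)$ I would compute the tension field of $g$ directly. Differentiating $g=\psi_2/\overline{\psi_1}$ and substituting the Dirac equations expresses $g_z$ and $g_{\bar z}$ through $\psi_1,\psi_2$ and $\mathcal U=\mathcal V=e^{w/2}$; a second differentiation, reusing \eqref{eq:linspinor}, yields $g_{z\bar z}$. The harmonic map equation for the Poincar\'e metric, $g_{z\bar z}+\frac{2\bar g}{1-|g|^2}g_zg_{\bar z}=0$, then reduces, after clearing the spinorial denominators, to the vanishing of the terms carrying the derivatives of $H$. Concretely, the only obstruction is the $\mathfrak{k}$-valued contribution $\tfrac12 H_ze^{-w/2+u/2}$ (and its conjugate) appearing on the diagonal of $\tilde U^{\l},\tilde V^{\l}$ in \eqref{eq:U-V1lambda}; combined with the Dirac-potential normalization $e^{w/2}=-\tfrac H2 e^{u/2}+\tfrac i4 h$, this shows that $g$ is harmonic if and only if $f$ is minimal. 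The qualifier \emph{non-conformal} corresponds exactly to $f$ being a nondegenerate immersion (equivalently $g_z,g_{\bar z}$ are both nonzero away from isolated points), which is what makes the correspondence between harmonic $g$ and minimal $f$ bijective rather than collapsing onto the $\pm$holomorphic, degenerate cases.

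The main obstacle is the computation in $(1)\Leftrightarrow(3)$: converting faithfully between the extrinsic geometry of $f$ in $\Nil$ (mean curvature $H$, second fundamental form, the normal $N$) and the spinorial data, while keeping every convention consistent --- the bundle curvature $\tau=1/2$, the normalization of the hyperbolic metric, and the $\sigma$-twisting. The delicate point is to show that harmonicity forces not merely $dH=0$ but the specific value making $f$ minimal in $\Nil$, and that the $H$-derivative terms in \eqref{eq:U-V1lambda} are the precise obstruction; once this identity is in hand, $(2)\Leftrightarrow(3)$ and hence the full three-way equivalence follow with only bookkeeping.
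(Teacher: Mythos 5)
The paper itself contains no proof of this statement: Theorem \ref{thm:mincharact} is recalled verbatim from \cite[Theorem 5.3]{DIK:mini} in the appendix of background material, so there is no in-document argument to compare yours against; I can only judge your proposal on its own terms and against the route taken in the cited source. Your architecture is the standard one and essentially the right one: $(2)\Leftrightarrow(3)$ is the Pohlmeyer--Uhlenbeck/DPW characterization of harmonic maps into $\ISU/\Uone$ once one has checked that \eqref{eq:U-V1lambda} is the canonical $\l$-twisting of the Maurer--Cartan form of the normalized spinor frame \eqref{eq:extmini}, and $(1)\Leftrightarrow(3)$ is a Ruh--Vilms-type computation. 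Your preliminary observations (upward normal $\Rightarrow h>0 \Rightarrow |g|<1$, the role of the Cartan decomposition, non-conformality of $g$ coming from nondegeneracy) are all correct.

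The gap is in the step you yourself flag as the heart of the matter, and it is not merely a deferred computation: the mechanism you propose would prove the wrong theorem. You assert that after clearing denominators the harmonic map equation for $g$ reduces to ``the vanishing of the terms carrying the derivatives of $H$,'' i.e.\ the diagonal contributions $\tfrac12 H_z e^{-w/2+u/2}$ in \eqref{eq:U-V1lambda}. Taken literally, that yields only $dH=0$, i.e.\ $g$ harmonic iff $f$ is {\sc CMC} --- but in $\Nil$ the normal Gauss map of a non-minimal {\sc CMC} surface is \emph{not} harmonic into $\mathbb H^2$, so the obstruction cannot be exhausted by the $dH$-terms. The decisive term is the one proportional to $H$ itself, entering through the real part of the Dirac potential $e^{w/2}=-\tfrac{H}{2}e^{u/2}+\tfrac{i}{4}h$: the off-diagonal entries of $\tilde U^{\l}$ and $\tilde V^{\l}$ carry $e^{w/2}$ (not $e^{\bar w/2}$), so the $\isu$-reality of $\alpha^{\l}$ --- which is part of statement $(2)$, since the connection is asserted to live on $\D\times\ISU$ --- and the vanishing of the $\l^{\pm1}$-coefficients of the curvature force $\Re e^{w/2}=-\tfrac{H}{2}e^{u/2}=0$, hence $H\equiv 0$, once the compatibility (Gauss--Codazzi/structure) equations of the immersion are fed in. Your proposal treats the reality condition as a routine check that will succeed for any $H$ and locates minimality elsewhere; without identifying and carrying out this part of the computation, the implication from harmonicity of $g$ to minimality of $f$ --- the actual content of the theorem --- is not established.
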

\begin{Definition}\label{def:extended}
 Let $f$ be a minimal surface in $\Nil$ 
 and $F^{\l}$ a $\LISU$-valued solution to the equation
 $(F^{\l})^{-1} d F^{\l} = \alpha^{\lambda}$ such that
 \begin{equation}\label{eq:extmini} 
 F^{\l}|_{\l =1} = 
\frac{1}{\sqrt{|\psi_1|^2-|\psi_2|^2}} 
 \begin{pmatrix}
 \sqrt{i}^{-1} \psi_1 & 
 \sqrt{i}^{-1} \psi_2 \\ 
 \sqrt{i} \;\overline{\psi_2} & 
  \sqrt{i}\; \overline{\psi_1}
 \end{pmatrix}. 
 \end{equation}
 Then $F$ is called an
 \textit{extended frame} of the minimal surface $f$.
\end{Definition}
%
\subsection{Sym-formula}\label{sc:Sym}
First we note that the (multiple of the ) Killing form 
 $\langle A,B\rangle = 4 \tr AB$ induces a Lorentz metric 
 on $\mathfrak{su}_{1,1}$. Thus we regard $\mathfrak{su}_{1,1}$ as 
 the Minkowski 3-space.  The basis
\begin{equation}\label{eq:basis}
 \mathcal{E}_1 = \frac{1}{2} \begin{pmatrix} 0 & i \\ -i &0 \end{pmatrix}, \;\;
 \mathcal{E}_2 = \frac{1}{2} \begin{pmatrix} 0 & -1 \\ -1 & 0 \end{pmatrix}\;\;
 \mbox{and}\;\;\;
 \mathcal{E}_3 = \frac{1}{2} \begin{pmatrix} -i & 0\\ 0 &i \end{pmatrix}
\end{equation}
  is an orthonormal basis of $\isu$ with timelike vector $\mathcal{E}_3$. 

 The timelike vector $\mathcal{E}_3$ generates
 the rotation group $\mathrm{SO}_2$ which acts isometrically on 
 $\mathbb{L}_3$ by rotations around the $x_3$-axis. On the other hand, the
 isometries $\exp(t\mathcal{E}_1)$ and $\exp(t\mathcal{E}_2)$ are called 
 {\it boosts}.

 Now we identify the Lie algebra 
 $\mathfrak{nil}_3$ of $\Nil$ with the 
 Lie algebra $\isu$ as a \textit{real vector space}.  
 A linear isomorphism $\Xi:\mathfrak{su}_{1,1}\to 
 \mathfrak{nil}_3$ is then given by
 \begin{equation}\label{eq:Nilidenti}
\mathfrak{su}_{1,1} \ni 
x_1 \mathcal{E}_1 + x_2 \mathcal{E}_2 + x_3 \mathcal{E}_3
\longmapsto
x_1 e_1 +  x_2 e_2 +  x_3 e_3 \in \mathfrak{nil}_{3}.
\end{equation}
 Note that the linear isomorphism $\Xi$ is not a Lie algebra 
 isomorphism. 
 Next we consider the exponential map 
 $\exp:\mathfrak{nil}_3\to \mathrm{Nil}_3$  
\be
 \exp (x_1 e_1 + x_2 e_2 + x_3 e_3) 
 = e^{x_1} E_{11} + \sum_{i=2}^4 E_{ii} + x_1 E_{2 3}
 + (x_3 +\frac{1}{2}x_1 x_2)E_{2 4} + x_2 E_{34},
\ee
 where $E_{ij}$ is a $4$ by $4$ matrix with the $ij$-entry equal to $1$, 
 and all other entries equal to $0$.
 Here we imbed $\Nil$ into ${\rm GL}_4 \mathbb R$ by $\iota : 
 \Nil \to {\rm GL}_4 \mathbb R$, $\iota(x_1 + x_2+ x_3) 
 = e^{x_1} E_{11} + \sum_{i=2}^4 E_{ii} + x_1 E_{2 3}
 + (x_3 +\frac{1}{2}x_1 x_2)E_{2 4} + x_2 E_{34}$.
 We define a smooth bijection  
 $\Xi_{\rm nil}:\isu \to \Nil$ by $\Xi_{\rm nil}:=\exp \circ \Xi$.
 Under this identification $\Nil=\mathfrak{su}_{1,1}$, 
 $\mathrm{SO}_2=\{\exp(t\,\mathcal{E}_3)\}_{t\in\mathbb{R}}$ 
 acts isometrically on $\Nil$ as rotations around $x_3$-axis.

 In what follows we will take derivatives 
 for functions of $\l$.
 Note that for $\l=e^{i\theta} \in \mathbb S^1$, we have
 $\partial_{\theta}=i \l \partial_{\l}$.
\begin{Theorem}[Theorem 6.1 in \cite{DIK:mini}]\label{thm:Sym}
 For the extended frame $F^{\l}$
 of some minimal surface $f$, define maps $f_{\Lt}^{\l}$ and $N_{\Lt}^{\l}$ 
 respectively by
 \begin{equation}\label{eq:SymMin}
 f_{\Lt}^{\l}=-i \l (\partial_{\l} F^{\l}) (F^{\l})^{-1} 
 -N_{\Lt}^{\l}\;\;
 \mbox{and} \;\;
 N_{\Lt}^{\l}= \frac{i}{2} \ad (F^{\l}) \sigma_3.
 \end{equation}
 Moreover, define a map  $f^{\l}:\mathbb{D}\to \mathrm{Nil}_3$ by
 $f^{\l}:=\Xi_{\mathrm{nil}}\circ \hat{f^{\l}}$ with
\begin{equation}\label{eq:symNil}
 \hat f^{\l} = 
    (f_{\Lt}^{\l})^o -\frac{i}{2} \l (\partial_{\l}  f_{\Lt}^{\l})^d, 
\end{equation}
 where the superscripts ``$o$'' and ``$d$'' denote the off-diagonal and 
 diagonal part, 
 respectively. Then, for each $\l \in \mathbb{S}^1$, the following 
 statements hold$:$ 
\begin{enumerate}
 \item The map $f_{\Lt}^{\l}$ is a spacelike {\sc CMC} surface 
with mean curvature $H=1/2$ in $\Lt$
 and $N_{\Lt}^{\l}$ is the timelike unit normal vector of $f_{\Lt}^{\l}$.
 \item The map $f^{\l}$ is a minimal surface in $\Nil$ and 
 $N_{\Lt}^{\l}$ is the normal Gauss map of $f^{\l}$. 
 In particular, $f^{\l}|_{\l =1}$ gives 
 the original minimal surface $f$ up to translation.
\end{enumerate}
\end{Theorem}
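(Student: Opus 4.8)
The plan is to verify the two Sym-formulas by direct computation from the flat extended frame, reducing the geometric content to the Ruh--Vilms principle (harmonic Gauss map $\Leftrightarrow$ constant mean curvature). First I would settle reality: for $\l\in\mathbb S^1$ the frame lies in $\ISU$, and differentiating in the loop parameter the reality condition defining $\LISU$ shows that both $-i\l(\partial_\l F^\l)(F^\l)^{-1}$ and $N_{\Lt}^\l=\tfrac{i}{2}\ad(F^\l)\sigma_3$ are $\isu$-valued; hence $f_{\Lt}^\l$ maps $\D$ into $\isu=\Lt$. The computational backbone is the pair of identities, obtained from $dF^\l=F^\l\alpha^\l$ and the commutativity of $d$ with $\partial_\l$,
\[
d\bigl(-i\l(\partial_\l F^\l)(F^\l)^{-1}\bigr)=-\ad(F^\l)\bigl(i\l\,\partial_\l\alpha^\l\bigr),
\qquad
dN_{\Lt}^\l=\tfrac{i}{2}\ad(F^\l)[\alpha^\l,\sigma_3],
\]
so that $df_{\Lt}^\l=\ad(F^\l)\Theta^\l$ for an explicit $\isu$-valued one-form $\Theta^\l$ assembled from $\tilde U^\l,\tilde V^\l$ in \eqref{eq:U-V1lambda}.

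For assertion $(1)$, since $\ad(F^\l)$ is a Lorentz isometry of $\isu$ for $\langle A,B\rangle=4\tr(AB)$, every first- and second-order invariant of $f_{\Lt}^\l$ is read off from $\Theta^\l$ alone. I would then check in turn conformality $\langle\partial_z f_{\Lt}^\l,\partial_z f_{\Lt}^\l\rangle=0$ and spacelikeness $\langle\partial_z f_{\Lt}^\l,\partial_{\bar z}f_{\Lt}^\l\rangle>0$, recovering the metric $e^{u_{\Lt}}dzd\bar z=h^2dzd\bar z$, together with the normalisations $\langle N_{\Lt}^\l,N_{\Lt}^\l\rangle=-1$ and $\langle N_{\Lt}^\l,\partial_z f_{\Lt}^\l\rangle=0$ identifying $N_{\Lt}^\l$ as the timelike unit normal. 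Rather than compute the full second fundamental form, I would invoke Ruh--Vilms: a spacelike surface in $\Lt$ has constant mean curvature exactly when its Gauss map is harmonic into $\mathbb{H}^2=\ISU/\Uone$. Here the Gauss map is precisely $N_{\Lt}^\l=\tfrac{i}{2}\ad(F^\l)\sigma_3$, harmonic by Theorem \ref{thm:mincharact}; the normalisation of $\alpha^\l$ pins the value to $H=1/2$, and a one-line evaluation of $\langle\partial_z^2 f_{\Lt}^\l,N_{\Lt}^\l\rangle$ gives the Hopf differential $-\l^{-2}Q\,dz^2$.

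For assertion $(2)$, I would compute the Maurer--Cartan form of $f^\l=\Xi_{\rm nil}\circ\hat f^\l$, where $\hat f^\l=(f_{\Lt}^\l)^o-\tfrac{i}{2}\l(\partial_\l f_{\Lt}^\l)^d$. Writing $\hat f^\l$ in the basis $\{\mathcal E_i\}$ of \eqref{eq:basis} and feeding $df_{\Lt}^\l=\ad(F^\l)\Theta^\l$ through the off-diagonal/diagonal splitting expresses $d\hat f^\l$ in terms of $F^\l$ and $\alpha^\l$; pushing forward by $\Xi_{\rm nil}=\exp\circ\Xi$ and the embedding $\iota$ then yields $(f^\l)^{-1}df^\l=\sum_k\phi_k^\l e_k\,dz+\text{c.c.}$ I would match the $z$-part against the spinor expressions $\phi_1=\overline{\psi_2}^2-\psi_1^2$, $\phi_2=i(\overline{\psi_2}^2+\psi_1^2)$, $\phi_3=2\psi_1\overline{\psi_2}$, using the initial value \eqref{eq:extmini}. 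Conformality $\sum_k(\phi_k^\l)^2=0$ then follows from the identities of part $(1)$, exhibiting $f^\l$ as a conformal immersion whose normal Gauss map is $g=\psi_2/\overline{\psi_1}$ with extended frame the very $F^\l$ we started from; since $g$ is harmonic, Theorem \ref{thm:mincharact} gives that $f^\l$ is minimal and that $N_{\Lt}^\l$ is its normal Gauss map. Evaluating the component formulas at $\l=1$ with \eqref{eq:extmini} reproduces $f^{-1}f_z$, so $f^\l|_{\l=1}$ and $f$ have the same Maurer--Cartan form and hence agree up to a left translation in $\Nil$.

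The hard part will be the bookkeeping in part $(2)$: because $\Xi$ is only a linear isomorphism $\isu\cong\nil$ and not a Lie-algebra homomorphism, $(f^\l)^{-1}df^\l$ is not simply $(\hat f^\l)^{-1}d\hat f^\l$, so one must carry the nonlinearity of the Heisenberg group law through the explicit $\exp$ and $\iota$ when converting $d\hat f^\l$ into the left-invariant coframe $\{e_i\}$. Keeping the hybrid combination --- off-diagonal part of $f_{\Lt}^\l$ together with the $\l$-derivative of its diagonal part --- aligned with the spinor normalisation \eqref{eq:extmini} so that exactly the $\phi_k$ emerge is the delicate computational core; by contrast the geometric conclusions (spacelike CMC, minimal, harmonic Gauss map) reduce to Ruh--Vilms and Theorem \ref{thm:mincharact} once the Maurer--Cartan forms are in hand.
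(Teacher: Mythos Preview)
The paper does not prove this theorem: it is stated in the appendix as a quotation of Theorem~6.1 from \cite{DIK:mini}, with no proof given here. So there is nothing in the present paper to compare your proposal against.

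That said, your sketch is a reasonable outline of how the result is actually established in \cite{DIK:mini}. The reality check, the identity $d\bigl(-i\l(\partial_\l F^\l)(F^\l)^{-1}\bigr)=\ad(F^\l)\bigl(-i\l\,\partial_\l\alpha^\l\bigr)$, and the reduction of first- and second-order invariants of $f_{\Lt}^\l$ to algebra in $\Theta^\l$ via the Lorentz isometry $\ad(F^\l)$ are exactly the standard moves for Sym-type formulas. Two small points to tighten: first, Ruh--Vilms gives constancy of $H$ but not the value $1/2$, so you will still need one direct computation (e.g.\ of $\langle\partial_z\partial_{\bar z}f_{\Lt}^\l,N_{\Lt}^\l\rangle$ against $e^{u_{\Lt}}$) rather than appealing to an unspecified ``normalisation of $\alpha^\l$''; second, the spinor identification \eqref{eq:extmini} holds only at $\l=1$, so for general $\l\in\mathbb S^1$ you must \emph{define} $\psi_j^\l$ from the entries of $F^\l$ and verify that they solve the linear spinor system \eqref{eq:linspinor} with $B$ replaced by $\l^{-2}B$, rather than ``match'' against a formula that is only valid at $\l=1$. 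Your assessment of where the genuine labour lies --- carrying $d\hat f^\l$ through the non-homomorphic identification $\Xi_{\rm nil}=\exp\circ\Xi$ to extract $(f^\l)^{-1}df^\l$ in the left-invariant coframe of $\Nil$ --- is accurate; this is indeed the computational core of the proof in \cite{DIK:mini}.
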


\begin{Remark}\label{rm:AppRem}
\mbox{}
\begin{enumerate}
\item It is known that the Maurer-Cartan form $\alpha^{\l} = \tilde U^{\l} dz +
 \tilde V^{\l} d \bar z$ in \eqref{eq:U-V1lambda} with $H=0$ and $\l =1$ 
 is the Maurer-Cartan form 
 of a spacelike {\sc CMC} surface with mean curvature $H=1/2$, the Hopf differential 
 $Q_{\Lt} dz^2 = - 4 B \,dz^2$ and the metric $h^2 dz d \bar z$, 
 see \cite[Lemma 3.1]{BRS:Min}.\footnote{
 The mean curvature and the Hopf differential 
 for spacelike surfaces in Minkowski space could  be defined differently from 
 the definition in Euclidean space;
 The second fundamental form ${\rm I\!I}$ and the mean curvature $H$  
 could be changed to $-{\rm I\!I}$ and $-H$, respectively, that is,
 in the above case the mean curvature and the Hopf differential would then become 
 $H =-1/2$ and  $Q_{\Lt} \, dz^2= 4 B \, dz^2$, respectively.} 
 Any $\LISU$-valued 
 solution $F^{\l}$ of $(F^{\l})^{-1} d F^{\l} = \alpha^{\l}$ 
 is called the {\it extended frame} of a spacelike {\sc CMC} surface in $\Lt$.
\item  The Hopf differential 
 of the spacelike {\sc CMC} surface $f_{\Lt}^{\l}$ in Theorem 
 \ref{thm:Sym}
 can be computed as $Q_{\Lt}^{\l} dz^2= -4 \l^{-2}B \,dz^2$, where $Q^{\l} \,dz^2 = 4 \l^{-2} B dz^2$ is the Abresch-Rosenberg 
 differential of the minimal surface $f^{\l}$ in $\Nil$.

\item Note that in Theorem \ref{thm:Sym}
 the choice of coordinates is free. 
 We will therefore apply this result to graph coordinates 
 as well as to conformal coordinates without further mentioning.
\end{enumerate}
\end{Remark}

 In the following Corollary, we compute 
 the Abresch-Rosenberg differential $Bdz^2$ for 
 the $1$-parameter family $f^{\l}$ in Theorem \ref{thm:Sym}
 and it implies that the family $f^{\l}$ actually 
 defines the associated family.
\begin{Corollary}\label{coro:associate}
 Let $f^{\l}$ be the family of minimal surfaces in $\Nil$
 defined by \eqref{eq:symNil}.
 Then $f^{\l}$ preserves 
 the mean curvature $(=0)$ and the support. Moreover, the
 Abresch-Rosenberg differential $Q^{\l}dz^2$ for $f^{\l}$
 is given by $Q^{\l} dz^2= 4 \l^{-2}  Bdz^2$, 
 where $Q \,dz^2 = 4 B\, dz^2$ is the Abresch-Rosenberg differential for $f^{\l}|_{\l =1}$. 
 Therefore $\{f^{\l}\}_{\l \in \mathbb S^1}$
 is the associated family of the minimal surface $f^{\l}|_{\l =1}$.
\end{Corollary}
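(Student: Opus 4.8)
The plan is to obtain all four conclusions from a single principle: that passing to the member $f^{\l_0}$ of the family amounts to shifting the loop parameter in the extended frame, $\mu \mapsto F^{\mu\l_0}$, followed by a constant gauge. Minimality (and hence the vanishing of the mean curvature for every $\l$) requires no work, since it is already part of Theorem \ref{thm:Sym}(2); the substance of the statement is the invariance of the support and the scaling $Q^{\l}=4\l^{-2}B$ of the Abresch--Rosenberg differential.

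First I would fix $\l_0\in\mathbb S^1$ and verify that the Sym construction commutes with the loop-parameter shift. Setting $G^{\mu}:=F^{\mu\l_0}$, one has $G^{\mu}|_{\mu=1}=F^{\l_0}$ and $i\mu(\partial_{\mu}G^{\mu})|_{\mu=1}=i\l(\partial_{\l}F^{\l})|_{\l=\l_0}$, so feeding $G^{\mu}$ into \eqref{eq:SymMin} and \eqref{eq:symNil} and evaluating at $\mu=1$ reproduces exactly $f_{\Lt}^{\l_0}$ and $f^{\l_0}$. Thus $G^{\mu}$ is a frame for $f^{\l_0}$. Its Maurer--Cartan form is $\alpha^{\mu\l_0}$, that is, the normal form \eqref{eq:U-V1lambda} with $H=0$ but with $\mu^{-1}$ replaced by $(\mu\l_0)^{-1}$ in the two off-diagonal entries. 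To put it into the shape \eqref{eq:U-V1lambda} dictated by Definition \ref{def:extended}, I would gauge on the right by the constant diagonal element $k=\di(\l_0^{-1/2},\l_0^{1/2})\in\Uone$. Because $k$ commutes with $\sigma_3$ and is independent of $z$ and $\mu$, it changes neither $N_{\Lt}^{\mu}$ nor the surface $f^{\mu}$ (the factor $k$ cancels in $-i\mu(\partial_{\mu}G^{\mu}k)(G^{\mu}k)^{-1}$); its only effect is to multiply the $(1,2)$- and $(2,1)$-entries of the upper potential by $\l_0$ and $\l_0^{-1}$ respectively, leaving the diagonal untouched.

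Reading off the gauged Maurer--Cartan form then yields both remaining claims simultaneously. The $(1,2)$-entry becomes $-\mu^{-1}e^{w/2}$, so the Dirac potential of $f^{\l_0}$ coincides with that of $f^{\l}|_{\l=1}$; since $H=0$ forces $e^{w/2}=\tfrac{i}{4}h$, this is exactly the invariance of the support $h$. The $(2,1)$-entry becomes $\mu^{-1}\l_0^{-2}Be^{-w/2}$, which (matching against $\mu^{-1}B^{\l_0}e^{-w^{\l_0}/2}$, with $w^{\l_0}=w$) gives $B^{\l_0}=\l_0^{-2}B$, hence $Q^{\l_0}dz^2=4B^{\l_0}dz^2=4\l_0^{-2}B\,dz^2$. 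Having produced an $\mathbb S^1$-family of minimal surfaces sharing a common support and carrying the Abresch--Rosenberg differential $4\l^{-2}B\,dz^2$, the final assertion that $\{f^{\l}\}$ is the associated family of $f^{\l}|_{\l=1}$ is then immediate from the definition of the associated family in \cite{DIK:mini}. As an independent check on the support, one may instead invoke Remark \ref{rm:metrics} with \eqref{eq:supportmetric}: the companion spacelike {\sc CMC} family $f_{\Lt}^{\l}$ built from the same $F^{\l}$ has $\l$-independent metric $h^2\,dz\,d\bar z$, so $h$ cannot vary.

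The step I expect to require the most care is the identification of $F^{\mu\l_0}$ (up to the constant $\Uone$-gauge) as a bona fide extended frame of $f^{\l_0}$: one must confirm that the loop-parameter shift genuinely commutes with the Sym formula, and that the normalization \eqref{eq:extmini} can be restored by a $z$-independent gauge which acts reciprocally on the two off-diagonal entries and therefore does not corrupt the reading of $e^{w/2}$ and of $B$. Once this is pinned down, the reciprocal action of a diagonal gauge against the uniform $\l_0^{-1}$ scaling produced by the shift is precisely what leaves the Dirac potential (the support) fixed while multiplying $B$ by $\l_0^{-2}$.
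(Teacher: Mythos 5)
Your proposal is correct. Note first that the paper itself states Corollary \ref{coro:associate} without proof: it sits in the appendix as part of the summary of material imported from \cite{DIK:mini}, and Remark \ref{rm:AppRem} already records the $\l^{-2}$-scaling of the Abresch--Rosenberg differential, so there is no in-paper argument to compare yours against. Your route --- shifting the loop parameter via $G^{\mu}=F^{\mu\l_0}$ and restoring the normal form \eqref{eq:U-V1lambda} by the constant gauge $k=\di(\l_0^{-1/2},\l_0^{1/2})\in\Uone$ --- is the standard loop-group argument, and the computations check out: conjugation by $k$ multiplies the $(1,2)$- and $(2,1)$-entries by $\l_0$ and $\l_0^{-1}$ respectively, so the $(1,2)$-entry returns to $-\mu^{-1}e^{w/2}$ (whence, since $H=0$ forces $e^{w/2}=\tfrac{i}{4}h$, the support is unchanged), while the $(2,1)$-entry becomes $\mu^{-1}\l_0^{-2}Be^{-w/2}$, giving $Q^{\l_0}\,dz^2=4\l_0^{-2}B\,dz^2$. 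The one step you rightly flag --- that $F^{\mu\l_0}k$ is genuinely an extended frame of $f^{\l_0}$ in the sense of Definition \ref{def:extended}, so that $e^{w/2}$ and $B$ read off its Maurer--Cartan form are the geometric data of $f^{\l_0}$ --- does hold: the Sym formulas \eqref{eq:SymMin} and \eqref{eq:symNil} commute with the parameter shift and are unchanged by a constant diagonal $\Uone$-gauge (which commutes with $\sigma_3$ and drops out of $-i\mu(\partial_\mu G^\mu k)(G^\mu k)^{-1}$), and $F^{\l_0}k\in\ISU$ is automatically of the shape \eqref{eq:extmini} for the spinors of $f^{\l_0}$. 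Your independent verification of the support via Remark \ref{rm:metrics} and \eqref{eq:supportmetric} is also valid, and appealing to Theorem \ref{thm:Sym} for the vanishing of the mean curvature of every member is exactly right.
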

\bibliographystyle{plain}
\def\cprime{$'$}

\end{document}